\newtheorem{lemm}{Lemma}
\newtheorem{proposition}{Proposition}
\newtheorem{theorem}{Theorem}
\newenvironment{proof}[1][Proof]{\bigskip\noindent\textbf{#1\, }}{\ \rule{0.5em}{0.5em}\\}
\newcommand{\mbI}{{\mathbb{I}}}
\newcommand{\mbN}{{\mathbb{N}}}
\newcommand{\mbR}{{\mathbb{R}}}
\newcommand{\mcK}{{\mathcal{K}}}
\newcommand{\mcF}{{\mathcal{F}}}
\newcommand{\mcL}{{\mathcal{L}}}
\newcommand{\mcN}{{\mathcal{N}}}
\newcommand{\mcR}{{\mathcal{R}}}
\newcommand{\mcW}{{\mathcal{W}}}
\newcommand{\Esp}{{\mathrm{E}}}
\newcommand{\Prob}{{\mathrm{P}}}
\newcommand{\Var}{{\mathrm{Var}}}
\newcommand{\diag}{{\mathrm{diag}}}
\newcommand{\diam}{{\mathrm{diam}}}
\newcommand{\defin}{:=}
\newcommand{\ds}{\displaystyle}
\newcommand{\grad}{\nabla\!}
\newcommand{\gradd}{\grad^{\,\,2}}
\newcommand{\BW}{{\mathrm{W}}}
\newcommand{\transp}{^{\boldsymbol{*}\!}}
\newcommand{\etransp}{^{\,\,\boldsymbol{*}\!\!}}
\newcommand{\what}{\widehat}
\newcommand{\wtilde}{\widetilde}
\newcommand{\vf}{\boldsymbol{f}}
\newcommand{\bF}{\boldsymbol{F}}
\newcommand{\bJ}{\boldsymbol{J}}
\newcommand{\btheta}{\boldsymbol{\theta}}
\newcommand{\balpha}{\boldsymbol{\alpha}}
\newcommand{\ba}{\boldsymbol{a}}
\newcommand{\bA}{\boldsymbol{A}}
\newcommand{\bbeta}{\boldsymbol{\beta}}
\newcommand{\bB}{\boldsymbol{B}}
\newcommand{\bu}{\boldsymbol{u}}
\newcommand{\bv}{\boldsymbol{v}}
\newcommand{\bw}{\boldsymbol{w}}
\newcommand{\bvarphi}{\boldsymbol{\varphi}}
\newcommand{\bpsi}{\boldsymbol{\psi}}
\newcommand{\bdelta}{\boldsymbol{\delta}}
\newcommand{\bgamma}{\boldsymbol{\gamma}}
\newcommand{\bmu}{\boldsymbol{\mu}}
\newcommand{\bxi}{\boldsymbol{\xi}}
\newcommand{\bchi}{\boldsymbol{\chi}}
\begin{document}
\title{Parametric   estimation  for a  signal-plus-noise model from   discrete time observations}
\author{Dominique Dehay$^1$, Khalil El Waled$^{2,3}$, Vincent Monsan$^{4}$ \\  
$^1$Univ Rennes, CNRS, IRMAR -- UMR 6625, \\
F--35000 Rennes, France.\\
 $^2$ University of Nouakchott Al Asriya, Mauritania,\\
  $^3$  Qassim University, Saudi Arabia. \\
	$^4$  Universit\'e F\'elix Houphou\"et-Boigny, Abidjan, C\^ote d'Ivoire.\\
dominique.dehay@univ-rennes2.fr, khalil.elwaled@gmail.com, vmonsan@yahoo.fr}

\maketitle

\bigskip
\textbf{Abstract}: This paper deals with the parametric inference for integrated signals   embedded in an additive Gaussian noise and observed at  deterministic discrete instants	which are not necessarily equidistant. 
The unknown parameter is multidimensional and compounded of 
a signal-of-interest parameter and  a variance parameter of the noise.
We state the consistency and the minimax efficiency of the maximum likelihood estimator  and  of the Bayesian estimator when the time of observation tends to $\infty$ and the delays between two consecutive observations tend to 0 or are only bounded. The class of signals in consideration contains among others, almost periodic signals  and also  non-continuous  periodic signals.
However the problem of frequency estimation is not considered here.

\bigskip\textbf{Keywords}:
Maximum likelihood estimation; Bayesian estimation; high frequency sampling; low frequency sampling; minimax efficiency;
asymptotic properties of estimators.
	

\section{Introduction}
	
	Consider the following  integrated signal-plus-noise  model 
	\begin{equation}\label{model}
	d X_t =  f(\balpha,t)\,dt + \sigma(\bbeta,t)\,d\BW_t,\qquad\qquad t\geq 0 
	\end{equation}
	where the functions $f:\bA\times\mbR^+\to\mbR$ and $\sigma:\bB\times\mbR^+\to\mbR^+$ are measurable, $f(\balpha,t)$, respectively $\sigma(\bbeta,t)$ is continuous in the first component $\balpha\in\bA$, respectively in $\bbeta\in\bB$;
	$\bA$ is a bounded open convex  subset  of $\mbR^p$, $\bB$ is a bounded open convex  subset  of $\mbR^q$, $p,q\geq 0$, $p+q>0$, and $\{\BW_t\}$ is a Wiener process 
	defined over a 
	probability space $(\Omega,\mcF,\Prob)$. 
	We assume that the initial random variable  $X_0$ is  independent on Wiener process $\{\BW_t\}$ and does not depend on the unknown parameter $\btheta\defin(\balpha,\bbeta)$.

	Since very long time, this model has received a  considerable amount of investigation. 
	The statistical analysis of such signals has attracted much interest, its applications ranging from telecommunications, mechanics, to econometrics and financial studies. 
	For the continuous time observation framework, we cite the well-known work by  Ibragimov and Has'minskii (1981) 
	as well as  the contributions by Kutoyants (1984)
	who studied the consistency and the minimax efficiency of the maximum likelihood estimator and the Bayesian estimator.
	
However, in practice it is difficult to record numerically a continuous time process and generally the observations take place at discrete moments (Mishra and Prakasa-Rao 2001).	
Most of the publications on discrete time observation  concern regular sampling, that is the discrete time observations are usually equally spaced. Nevertheless,
many applications make use of non equidistant sampling.
The sampled points can be associated with quantiles of some distribution (see e.g. in another context  Blanke and Vial 2014; see also Sacks and Ylvisaker 1968) or can be perturbated by some jitter effect (see e.g. Dehay, Dudek and El Badaoui 2017).

The aim of the paper is the study of the maximum likelihood estimator and the Bayesian  estimator of the	unknown parameter $\btheta=(\balpha,\bbeta)$ 
	from a discrete time observation $\{X_{t_0},\dots,X_{t_n}\}$, $0=t_0<t_1<\cdots<t_n=T_n$ of the process $\{X_t\}$ as $n$ and $T_n\to\infty$, and the delays between two consecutive observations tend to 0 or are only bounded.
 The non uniform sampling scheme  is scarcely taken in consideration in the usual literature on the inference of such a model~(\ref{model}) of integrated signal-plus-noise. 
We obtain that for this scheme of observation, the rate of convergence of the maximum likelihood estimator  and the Bayesian estimator for the parameter $\balpha$ of the signal-of-interest 
	 is  $\sqrt{T_n}$ while the rate of convergence for the parameter $\bbeta$ of the noise variance 
		is $\sqrt{n}$, without any condition on the speed of convergence to 0 of the delay between two observations as $n\to\infty$, in contrary to the model of an ergodic diffusion (Stoyanov 1984, Florens-Zmirou 1989, Genon-Catalot 1992, Mishra and Prakara Rao 2001, Uchida and Yoshida 2012). This fact is due to the non-randomness of the signal-of-interest $f(\balpha,t)$ and of the variance $\sigma^2(\bbeta,t)$. Notice that model~(\ref{model}) is not  ergodic, and the signal-of-interest is  not necessarily continuous or periodic in time.
The problem of frequency estimation is not tackled in this work.

\bigskip
The paper is organized as follows. In Section~\ref{sect:framework}, we introduce the framework and the assumptions  on the model and  the scheme of observation. We also state that  these assumptions are fulfilled for almost periodic models.
Model~(\ref{model}) being Gaussian, the exact log-likelihood of the increments of the observations is given by relation~(\ref{eq:log-likelihood}) and in Section~\ref{sect:LAN} we deduce the local asymptotic normality property of the model of observation when the delays between two consecutive observations tend to 0 or are only bounded, in any case the total time of observation $T_n$ goes to infinity. Then 
	in Sections~\ref{sect:effic-mle} and~\ref{sect:effic-Bayes} we prove
that the maximum likelihood estimator and the Bayesian estimator are consistent, asymptotically normal and asymptotically optimal, following the method of minimax efficiency from Chapter~III in (Ibragimov and Has'minskii 1981). 
Then  examples of linear models are provided in Section~\ref{sect:linear}.  	
Some technical results are gathered in Appendix A. We complete this work  by stating in Appendix B some expressions of the Fisher information matrices and of the identifiability functions in the cases of almost periodic and periodic functions.


\section{Framework}\label{sect:framework}

From now on we concentrate on the consistency and the efficiency of the maximum likelihood estimator and the Bayesian estimator of the parameter $\btheta$ for the integrated signal-plus-noise model~(\ref{model}).
For that purpose we assume that the observations occur at  instants $0=t_0<t_1<\cdots<t_n=T_n$ of  the interval $[0, T_n]$, where 
	 $0<t_i-t_{i-1}\leq h_n\defin\max_i\{t_i-t_{i-1}\}$.
 We assume that $T_n\to\infty$	as $n\to\infty$ and $\{h_n\}$ is bounded.
	    
	Notice that the observation of the sequence $X_{t_i}$, $i\in\{0,\dots,n\}$ corresponds  to the observation of 
	$Y_0\defin X_0$ and of the increments defined by	$Y_i\defin X_{t_i}- X_{t_{i-1}}$, $i\in\{1,\dots,n\}$. 
		Denote
	$$F_i(\balpha)\defin\int_{t_{i-1}}^{t_i}f(\balpha,t)\,dt  
	\quad\mbox{and}\quad 
	G_i(\bbeta)\defin\left(\int_{t_{i-1}}^{t_i}\sigma^2(\bbeta,t)\,dt\right)^{1/2}.$$

When the true value of the parameter is $\btheta=(\balpha,\bbeta)$, the increment $Y_i$, $i\geq 1$, is equal to
	\begin{equation}\label{eq:Ytheta}
	Y^{(\btheta)}_i\defin F_i(\balpha)+\int_{t_{i-1}}^{t_i}\sigma(\bbeta,t)\,d\BW_t.
	\end{equation}
	Thus the  random variable $Y_i$, $i\geq 1$, is Gaussian with mean $F_i(\balpha)$ and variance $G_i^2(\bbeta)$ : $\mcL_{\btheta}[Y_i]=\mcN\left(F_i(\balpha) , G_i^2(\bbeta)\right)$. Moreover the random variables $Y_i$, $i=0,\dots,n$, are  independent. Therefore
we can compute  the  log-likelihood   of the increments   $\{Y_i:i=1\dots,n\}$ which is equal to
	\begin{equation}\label{eq:log-likelihood}
	\Lambda_n(\btheta)=\frac{-n\ln(2\pi)}{2}-\sum_{i=1}^n \ln G_i(\bbeta)-\sum_{i=1}^n \frac{\big(Y_i-F_i(\balpha)\big)^2}{2G_i^2(\bbeta)}.
	\end{equation}
	
	Henceforth we assume that the following conditions are fulfilled.

	\bigskip	
\textbf{Assumption A1}\quad 
The functions $f:\bA\times\mbR^+\to\mbR$, and $\sigma:\bB\times\mbR^+\to\mbR^+$, are measurable. 
The function $\balpha\mapsto f(\balpha,t)$ is differentiable and the gradient function $\balpha\mapsto\grad_{\balpha}f(\balpha,t)$  is uniformly continuous in  $\balpha\in\bA$ uniformly with respect to the time $t$ varying in $\mbR^+$. 	
The function $\bbeta\mapsto \sigma^2(\btheta,t)$ is two-times differentiable and the  functions $\bbeta\mapsto\grad_{\bbeta}\sigma^2(\bbeta,t)$ and $\bbeta\mapsto\gradd_{\bbeta}\sigma^2(\bbeta,t)$,  are uniformly continuous  in  $\bbeta\in\bB$ uniformly with respect to the time $t$ varying in $\mbR^+$.
Hence, for every   $\gamma>0$ 	there exists $\eta>0$ such that for $|\balpha-\balpha'|\leq\eta$ and $|\bbeta-\bbeta'|\leq\eta$ we have
	$$\sup_t\big|\grad_{\balpha}f(\balpha, t)-\grad_{\balpha}f(\balpha', t)\big|\leq\gamma$$
and 
\begin{equation*}
\sup_t\Big(\big|\grad_{\bbeta}\sigma^2(\bbeta, t)-\grad_{\bbeta} \sigma^2(\bbeta', t)\big|+
\big|\gradd_{\bbeta}\sigma^2(\bbeta, t)-\gradd_{\bbeta} \sigma^2(\bbeta', t)\big|\Big)\leq\gamma.
\end{equation*}
	
Here
the $p$-dimensional vector $\grad_{\balpha}f(\balpha,t)$ is the gradient (derivative) function of $f(\balpha,t)$ with respect to $\balpha=(\alpha_1,\dots,\alpha_p)$: 
		$\grad_{\balpha}f(\balpha,t)\defin\big(\partial_{\alpha_1}f(\balpha,t),\dots,\partial_{\alpha_p}f(\balpha,t)\big)$; 
the $q$-dimensional vector
$\grad_{\bbeta}\sigma^2(\bbeta,t)$ is the  gradient function of $\sigma^2(\bbeta,t)$ with respect to $\bbeta=(\beta_1,\dots,\beta_q)$;
the $q\times q$-matrix $\gradd_{\bbeta}\sigma^2(\bbeta,t)$ is the  second order derivative  of $\sigma^2(\bbeta,t)$ with respect to $\bbeta=(\beta_1,\dots,\beta_q)$:
	$\gradd_{\bbeta}\sigma^2(\bbeta,t)\defin \Big(\partial_{\beta_j}\partial_{\beta_k}\sigma^2(\bbeta,t)\Big)_{1\leq j,k\leq q}$.

\bigskip
\textbf{Assumptions A2}\quad 
The function $t\mapsto f(\balpha,t)$ is locally integrable in $\mbR$ for any $\balpha\in\bA$;		moreover
\begin{equation*}
		0<\inf_{\bbeta,t}\sigma^2(\bbeta,t)\leq\sup_{\bbeta,t}\sigma^2(\bbeta,t)<\infty;
		\end{equation*}
	\begin{equation*}
		\sup_{\balpha,t}|\grad_{\balpha}f(\balpha,t)|<\infty
			\qquad\mbox{and}\qquad
		\sup_{\bbeta,t}|\grad_{\bbeta}\sigma^2(\bbeta,t)|<\infty.
		\end{equation*}

		\bigskip
	\textbf{Assumptions A3}\quad 
	There exist two positive definite matrices $\bJ_p^{(\balpha,\bbeta)}$ and $\bJ_q^{(\bbeta)}$  such that
	\begin{eqnarray*}
				&&\bJ_p^{(\balpha,\bbeta)}
			=\lim_{n\to\infty}\frac{1}{T_n}\sum_{i=1}^n \frac{\grad_{\balpha}^{\etransp}F_i(\balpha)\,\grad_{\balpha}F_i(\balpha)}{G_i^2(\bbeta)}		\\	
			&&\bJ_q^{(\bbeta)}
			=
			\lim_{n\to\infty}\frac{1}{2n}\sum_{i=1}^n \grad_{\bbeta}^{\etransp}\ln G_i^2(\bbeta)\,\grad_{\bbeta}\ln G_i^2(\bbeta)
\end{eqnarray*}
	the convergences being		uniform with respect to $\btheta$ varying in $\Theta=\bA\times\bB$. 
	Here and henceforth the superscript $\!^{\transp}$ designates the transpose operator for vectors and matrices.
	
		\bigskip
		\textbf{Assumptions A4}\quad
		For every $\nu>0$ there exists $\mu_{\nu}>0$ and $n_{\nu}>0$ such that 
		$$\frac{1}{T_n}\sum_{i=1}^n \frac{\big(F_i(\balpha)-F_i(\balpha+\bdelta)\big)^2}{t_i-t_{i-1}}\geq\mu_{\nu}\qquad\mbox{and}\qquad
		\frac{1}{n}\sum_{i=1}^n \frac{\big(G_i^2(\bbeta)-G_i^2(\bbeta')\big)^2}{(t_i-t_{i-1})^2}\geq\mu_{\nu}$$
		for any $n\geq n_{\nu}$ and all $\btheta=(\balpha,\bbeta)$, $\btheta'=(\balpha',\bbeta')$ in $\Theta$ with $|\balpha-\balpha'|\geq\nu$ and $|\bbeta-\bbeta'|\geq\nu$.

\bigskip
\paragraph{Remarks}\quad

1) Assumptions A1 and A2 are technical conditions. 
We readily see that assumptions A1 and A2 are satisfied when the parameter set $\Theta=\bA\times\bB$ is compact  and the functions $(\balpha,t)\mapsto \big(f(\balpha,t),\grad_{\balpha}f(\balpha,t)\big)$ and $(\bbeta,t)\mapsto \big(\sigma^2(\bbeta,t),\grad_{\bbeta}\sigma^2(\bbeta,t),\gradd_{\bbeta}\sigma^2(\bbeta,t)\big)$ are continuous  and  periodic in $t$.   More generally 
these assumptions A1 and A2 are also satisfied when we replace the periodicity by the almost periodicity in $t$ uniformly with respect to $\btheta=(\balpha,\bbeta)\in\Theta$ (see Appendix B).

2) Assumption A1 is generally not  satisfied when we consider the problem of frequency estimation.
For example, the signal-of-interest  $f(\alpha,t)=\sin(\alpha t)$ with $\alpha\in\bA$, $\bA\subset\mbR$
does not satisfied assumption A1 since $\sup_t|\cos(\balpha t)-\cos(\balpha' t)|=2$ when $\balpha\neq\balpha'$.

3) With assumption A3 we can define the asymptotic Fisher information $d\times d$-matrix $\bJ^{(\btheta)}$ of the model, $d\defin p+q$, by
 $$\bJ^{(\btheta)}\defin \diag\Big[\bJ_p^{(\balpha,\bbeta)},\bJ_q^{(\bbeta)}\Big]=\left[\begin{array}{cc}\bJ_p^{(\balpha,\bbeta)}&0_{p\times q}\\
0_{q\times p}&\bJ_q^{(\bbeta)}\end{array}\right].$$
Under conditions A1, A2 and A3, the function $\btheta\mapsto\bJ^{(\btheta)}$  is continuous on $\Theta=\bA\times\bB$. Furthermore as $J^{(\btheta)}$ is a  positive definite matrix, its square root $\big(J^{(\btheta)}\big)^{-1/2}=\diag[\big(J_p^{(\balpha,\bbeta)}\big)^{-1/2},\big(J_q^{(\bbeta)}\big)^{-1/2}\big]$ is well defined and is continuous on $\btheta\in\Theta$. 

Besides the limits $\bJ_p^{(\balpha,\bbeta)}$ and $\bJ_q^{(\bbeta)}$   exist when the functions $\grad_{\balpha}f(\balpha,t)$, $\sigma^2(\bbeta,t)$ and $\grad_{\bbeta}\sigma^2(\bbeta,t)$ are almost periodic in time $t$ and $h_n\to 0$ or when these functions are periodic and the delay between two observations is constant $h=P/\nu$, $\nu\in\mbN$ being fixed (see Appendix B). 

4) Assumption A4 is an identifiability condition.
Assume that the following limits exist
\begin{eqnarray}
				&&\mu_p(\balpha,\balpha')
			\defin\liminf_{n\to\infty}\frac{1}{T_n}\sum_{i=1}^n \frac{\big(F_i(\balpha)-F_i(\balpha')\big)^2}{t_i-t_{i-1}}
			\label{lim:(F-F)carr}\\
			&&\mu_q(\bbeta,\bbeta')
			\defin\liminf_{n\to\infty}\frac{1}{n}\sum_{i=1}^n \frac{\big(G_i^2(\bbeta)-G_i^2(\bbeta')\big)^2}{(t_i-t_{i-1})^2},
			\label{lim:(G2-G2)carr}
		\end{eqnarray}
		the convergences being uniform with respect to $\balpha$, $\balpha'\in\bA$, $\bbeta$, $\bbeta'\in\bB$ with $|\balpha-\balpha'|\geq\nu$  and $|\bbeta-\bbeta'|\geq\nu$,
 and that
$$\mu_{\nu}\defin\frac{1}{2}\min\Big\{\inf_{|\balpha-\balpha'|\geq\nu}\mu_p(\balpha,\balpha')\,,\,\inf_{|\bbeta-\bbeta'|\geq\nu}\mu_q(\bbeta,\bbeta')\Big\}>0$$
for any $\nu>0$, then Assumption A4 is fulfilled.

When the functions  $f(\balpha,t)$ and $\sigma^2(\bbeta,t)$ are almost periodic in time $t$, then $\mu_p(\balpha,\balpha')$ and $\mu_q(\bbeta,\bbeta')$ exist and  if in addition  
 for  $\balpha\neq\balpha'$   there exists $t$ such $f(\balpha,t)\neq f(\balpha',t)$, and for  $\bbeta\neq \bbeta'$ there exists $t$ such that $\sigma^2(\bbeta,t)\neq\sigma^2(\bbeta',t)$,
then $\mu_p(\balpha,\balpha')$ and $\mu_q(\bbeta,\bbeta')$ are positive. See also Appendix B.

5) Expressions for $\bJ_p^{(\balpha,\bbeta)}$, $\bJ_q^{(\bbeta)}$, $\mu_p(\balpha,\balpha')$ and $\mu_p(\balpha,\balpha')$ are given in Appendix B when the functions $f(\btheta,t)$ and $\sigma^2(\bbeta,t)$ are periodic in time $t$ as well as when the delays between two observations tend to 0 than when the delays are constant.

	
	\section{LAN property of the model}\label{sect:LAN}
	
	To establish the asymptotic normality and the asymptotic efficiency of the maximum likelihood estimator and of the Bayesian estimator, we will apply the method  from (Ibragimov and Has'minskii 1981) on minimax efficiency. Thus,
	we study the asymptotic behaviour of the likelihood of the observation in the neighbourhood of the true value of the parameter. For this purpose we define 
	 the    log-likelihood ratio  
$$\Lambda_n^{(\btheta,\bw)}
\defin \ln\left(\frac{d\Prob^n_{\btheta+\bw\Phi_n^{(\btheta)}}}{d\Prob^n_{\btheta}}\big((Y_0,\dots,Y_n)\big)\right)$$
for $\bw\in \mcW_{\btheta,n} \defin\{\bw\in\mbR^d :\btheta+\bw\Phi^{(\btheta)}_n\in\Theta\}$.
 Here the invertible $d\times d$-matrix (local normalizing matrix) $\Phi_n^{(\btheta)}$ is equal to
 $\Phi_n^{(\btheta)}\defin \diag\big[\bvarphi_n^{(\balpha,\bbeta)},\bpsi_n^{(\bbeta)}\big]$
where $\bvarphi^{(\btheta)}_n\defin\big(T_n\bJ_p^{(\balpha,\bbeta)}\big)^{-1/2}$, $\bpsi^{(\bbeta)}_n\defin\big(n\bJ_q^{(\bbeta)}\big)^{-1/2}$. 
Furthermore $\Prob^n_{\btheta+\bw\Phi_n^{(\btheta)}}$ is the distribution of $(Y_0,\dots,Y_n)$ when the value of the parameter is $\btheta+\bw\Phi_n^{(\btheta)}$, and $\Prob^n_{\btheta}$ is the distribution of $(Y_0,\dots,Y_n)$ when the value of the parameter is $\btheta$. 
Now we state that the family of  distribution densities $\{d\Prob^n_{\btheta+\bw\Phi_n^{(\btheta)}}/d\Prob^n_{\btheta}\}$ is asymptotically normal as $n\to\infty$.
 More precisely
\begin{proposition}\label{prop:lan}
		 Assume that $\Theta=\bA\times\bB$ is open and convex, and conditions~A1, A2 and A3 are fulfilled. 
		Then
		the family $ \{\Prob_{\btheta}^{(n)}:\btheta \in\Theta\}$ is uniformly locally asymptotically normal (uniformly LAN) in any compact subset $\mcK$ of $\Theta$.
		That is for any compact subset $\mcK$ of $\Theta$, for arbitrary sequences
		$\{\btheta_n\}\subset\mcK$ and $\{\bw_n\}\subset\mbR^d$ such that $\btheta_n+\bw_n\Phi_n^{(\btheta_n)}\in\mcK$ and $\bw_n\to \bw\in\mbR^d$ as $n\to\infty$, the log-likelihood ratio $\Lambda_n^{(\btheta_n,\bw_n)}$ can be decomposed as
		$$\Lambda^{(\btheta_n,\bw_n)}_n = \Delta^{(\btheta_n)}_n\bw^{\transp} -\frac{1}{2}|\bw|^2 +r_n(\btheta_n,\bw_w)$$
		where the random vector $\Delta^{(\btheta_n)}_n$ converges in law to the standard normal distribution:
		$$\lim_{n\to\infty}\mcL_{\btheta_n}\left[\Delta_n^{(\btheta_n)}\right]=\mcN_d(0_d,\mbI_{d\times d}),$$
		$d=p+q$,
		and the random variable $r_n(\btheta_n,\bw_n)$ converges in  $\Prob_{\btheta_n}$-probability to 0.
\end{proposition}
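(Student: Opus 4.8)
The plan is to work directly with the exact Gaussian log-likelihood~(\ref{eq:log-likelihood}) and to Taylor-expand $\Lambda_n(\btheta+\bw\Phi_n^{(\btheta)})-\Lambda_n(\btheta)$ around $\btheta$, splitting it into a linear (score) term, a deterministic quadratic term, and a negligible remainder. First I would note that $Y_0=X_0$, having a $\btheta$-independent law and being independent of $(Y_1,\dots,Y_n)$, cancels in the Radon--Nikodym derivative, so that $\Lambda_n^{(\btheta,\bw)}$ is a sum of $n$ independent contributions of the Gaussian increments. Fixing a compact $\mcK\subset\Theta$, sequences $\btheta_n=(\balpha_n,\bbeta_n)\in\mcK$ and $\bw_n\to\bw$ with $\btheta_n+\bw_n\Phi_n^{(\btheta_n)}\in\mcK$, and writing $\balpha_n'\defin\balpha_n+\bw_{n,\balpha}\bvarphi_n^{(\btheta_n)}$, $\bbeta_n'\defin\bbeta_n+\bw_{n,\bbeta}\bpsi_n^{(\btheta_n)}$, as well as, under $\Prob_{\btheta_n}$, the i.i.d.\ standard normal variables $\zeta_i\defin\big(Y_i-F_i(\balpha_n)\big)/G_i(\bbeta_n)$ and the deterministic ratios $\rho_i\defin G_i^2(\bbeta_n)/G_i^2(\bbeta_n')$, a direct computation from~(\ref{eq:log-likelihood}) gives
\begin{equation*}
\Lambda_n^{(\btheta_n,\bw_n)}=\sum_{i=1}^n\left[\tfrac12\ln\rho_i+\tfrac{1-\rho_i}{2}\,\zeta_i^2+\frac{G_i(\bbeta_n)\big(F_i(\balpha_n')-F_i(\balpha_n)\big)}{G_i^2(\bbeta_n')}\,\zeta_i-\frac{\big(F_i(\balpha_n')-F_i(\balpha_n)\big)^2}{2\,G_i^2(\bbeta_n')}\right].
\end{equation*}

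Then I would linearize. Since $\btheta\mapsto\bJ^{(\btheta)}$ is continuous and positive definite on $\mcK$ (Remark~3), it is bounded above and below there, hence $\Phi_n^{(\btheta_n)}\to0$ uniformly over $\btheta_n\in\mcK$. Using A2 --- which gives $G_i^2(\bbeta)\asymp t_i-t_{i-1}$ uniformly in $\bbeta$ and $|F_i(\balpha')-F_i(\balpha)|\le(t_i-t_{i-1})\,\sup_{\balpha,t}|\grad_{\balpha}f(\balpha,t)|\,|\balpha'-\balpha|$ --- together with the uniform-in-$t$ continuity of $\grad_{\balpha}f$, $\grad_{\bbeta}\sigma^2$, $\gradd_{\bbeta}\sigma^2$ in A1, I expand $F_i(\balpha_n')-F_i(\balpha_n)$ to first order in $\balpha_n$ and $\rho_i-1$ to first order in $\bbeta_n$, with remainders uniformly small compared with the linear terms, and use $\ln(1+u)=u-u^2/2+O(u^3)$. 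Collecting, the linear-in-noise part is $\Delta_n^{(\btheta_n)}\bw_n\transp$ with $\Delta_n^{(\btheta_n)}=\big(\Delta_{n,\balpha}^{(\btheta_n)},\Delta_{n,\bbeta}^{(\btheta_n)}\big)$,
\begin{equation*}
\Delta_{n,\balpha}^{(\btheta_n)}=\Big(\sum_{i=1}^n\frac{\zeta_i\,\grad_{\balpha}F_i(\balpha_n)}{G_i(\bbeta_n)}\Big)\bvarphi_n^{(\btheta_n)},\qquad
\Delta_{n,\bbeta}^{(\btheta_n)}=\frac12\Big(\sum_{i=1}^n(\zeta_i^2-1)\,\grad_{\bbeta}\ln G_i^2(\bbeta_n)\Big)\bpsi_n^{(\btheta_n)},
\end{equation*}
while the leftover part is deterministic, namely $\sum_i\big[(F_i(\balpha_n')-F_i(\balpha_n))^2/(2G_i^2(\bbeta_n'))+(\rho_i-1)^2/4\big]$; by A3 and the definitions $\bvarphi_n^{(\btheta_n)}=(T_n\bJ_p^{(\balpha_n,\bbeta_n)})^{-1/2}$, $\bpsi_n^{(\btheta_n)}=(n\bJ_q^{(\bbeta_n)})^{-1/2}$, the Fisher-type averages $\frac1{T_n}\sum_i\grad_{\balpha}^{\etransp}F_i\,\grad_{\balpha}F_i/G_i^2$ and $\frac1{2n}\sum_i\grad_{\bbeta}^{\etransp}\ln G_i^2\,\grad_{\bbeta}\ln G_i^2$ converge uniformly in $\btheta_n\in\mcK$ to $\bJ_p^{(\balpha_n,\bbeta_n)}$ and $\bJ_q^{(\bbeta_n)}$ (using again the uniform lower bound on $\bJ^{(\btheta)}$), so that leftover part tends to $\tfrac12|\bw_{n,\balpha}|^2+\tfrac12|\bw_{n,\bbeta}|^2\to\tfrac12|\bw|^2$.

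Next I would establish $\mcL_{\btheta_n}\big[\Delta_n^{(\btheta_n)}\big]\to\mcN_d(0_d,\mbI_{d\times d})$. The block $\Delta_{n,\balpha}^{(\btheta_n)}$ is \emph{exactly} centred Gaussian, being linear in the $\zeta_i$, with covariance $\bvarphi_n^{(\btheta_n)\transp}\big(\sum_i\grad_{\balpha}^{\etransp}F_i\,\grad_{\balpha}F_i/G_i^2\big)\bvarphi_n^{(\btheta_n)}=\mbI_{p\times p}+o(1)$; moreover $\Esp_{\btheta_n}[\zeta_i(\zeta_i^2-1)]=0$, so the two blocks are uncorrelated and, within each $i$ and across $i$, all contributions are independent. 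By the Cram\'er--Wold device it then suffices to prove a central limit theorem for $\bu\,\Delta_n^{(\btheta_n)\transp}$, $\bu\in\mbR^d$; this is a sum of independent, row-wise triangular-array terms of uniform order $O\big(|\bvarphi_n^{(\btheta_n)}|\sqrt{t_i-t_{i-1}}\big)$ or $O\big(|\bpsi_n^{(\btheta_n)}|\big)$, so its Lyapunov ratio $\sum_i\Esp_{\btheta_n}|\cdot|^{2+\delta}$ is $O\big((h_n/T_n)^{\delta/2}\big)+O\big(n^{-\delta/2}\big)\to0$ uniformly over $\btheta_n\in\mcK$ (this uses $\sum_i(t_i-t_{i-1})=T_n$, $\{h_n\}$ bounded, and the moment bounds in A2), while its variance tends to $|\bu|^2$; a Berry--Esseen estimate makes the convergence uniform along the sequence $\btheta_n$.

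Finally I would gather into $r_n(\btheta_n,\bw_n)$ every discarded term: the Taylor remainders of the $F_i$'s and of $\ln(1+\cdot\,)$, the error from replacing $G_i^2(\bbeta_n')$ by $G_i^2(\bbeta_n)$ in the denominators, the cross terms between the linear and the quadratic pieces, the $o(1)$'s produced by A3, and the replacement of $\bw_n$ by $\bw$ in the two main terms --- the last being negligible because $\Delta_n^{(\btheta_n)}$ is tight by the previous step and $|\bw_n|^2\to|\bw|^2$. Each stochastic contribution is a sum of independent centred terms whose variance is dominated by the cubic-type estimates $\sum_i\big(|\bvarphi_n^{(\btheta_n)}|\sqrt{t_i-t_{i-1}}\,\big)^3=O\big((h_n/T_n)^{1/2}\big)$ and $\sum_i|\bpsi_n^{(\btheta_n)}|^3=O\big(n^{-1/2}\big)$, hence tends to $0$ in $\Prob_{\btheta_n}$-probability; each deterministic contribution vanishes by the same estimates together with A3. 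I expect the main obstacle to be precisely this last step: keeping every remainder bound uniform over the moving point $\btheta_n\in\mcK$ --- which is exactly what the uniformity in $t$ in A1 and the uniformity in $\btheta$ in A3 are for --- and controlling the normalized $\balpha$-increments even when some delays $t_i-t_{i-1}$ are small, which is guaranteed by $\big(F_i(\balpha_n')-F_i(\balpha_n)\big)/G_i(\bbeta_n)=O\big(\sqrt{h_n/T_n}\,\big)$ uniformly, so that no speed of convergence of $h_n$ to $0$ is required.
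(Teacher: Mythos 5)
Your proposal is correct and follows essentially the same route as the paper: the same exact Gaussian decomposition of $\Lambda_n^{(\btheta_n,\bw_n)}$ into a stochastic part linear in $(\zeta_i,\zeta_i^2-1)$ plus a deterministic part, the same score vector $\Delta_n^{(\btheta_n)}$, and the same use of A1--A3 to make the remainders vanish uniformly over $\mcK$. The only real divergence is the sub-step proving $\mcL_{\btheta_n}[\Delta_n^{(\btheta_n)}]\to\mcN_d(0_d,\mbI_{d\times d})$, which you get via Cram\'er--Wold plus a Lyapunov/Berry--Esseen bound while the paper computes the Laplace transform explicitly (Lemma~\ref{lemm:conv-L-Delta}); both are valid, though note that the stochastic remainder is ultimately controlled by the uniform continuity of $\grad_{\balpha}f$ and $\grad_{\bbeta}\sigma^2$ from A1 (as you acknowledge at the end) rather than by genuinely cubic Taylor estimates, since $f$ is only once differentiable in $\balpha$.
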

	
	\begin{proof} 
	Since the random variables $Y_i$, $i=0,\dots,n$ are independent, 
	the distribution of $Y_0$ does not depend on $\btheta$ and the random variable $Y_i$, $i\geq 1$, is Gaussian with $\mcL_{\btheta}[Y_i]=\mcN\big(F_i(\balpha),G_i^2(\bbeta)\big)$, the log-likelihood $\Lambda_n^{(\btheta,\bw)}$
	is equal to 
\begin{eqnarray*}
		\Lambda_n^{(\btheta,\bw)}
		=-\sum_{i=1}^n\ln \left(\frac{G_i(\bbeta+\bv\bpsi_n^{(\bbeta)})}{G_i(\bbeta)}\right)
		-
		\sum_{i=1}^n \left(\frac{\big(Y_i-F_i(\balpha+\bu\bvarphi_n^{(\balpha,\bbeta)})\big)^2}{2G_i^2\big(\bbeta+\bv\bpsi_n^{(\bbeta)}\big)}
		-
		\frac{\big(Y_i-F_i(\balpha)\big)^2}{2G_i^2(\bbeta)}\right)
	\end{eqnarray*}		
where 
$\bw\defin(\bu,\bv)$.
Then, plugging in the right hand side of the previous equality 
 the expression~(\ref{eq:Ytheta}) of $Y_i$ when the true value of the parameter is $\btheta$,
 we can write
	\begin{eqnarray*}
		\Lambda_n^{(\btheta,\bw)}
		=
		\sum_{i=1}^n \left(M_{n,i}^{(\btheta,\bw)}+R_{n,i}^{(\btheta,\bw)}\right)
\end{eqnarray*}
where
	\begin{eqnarray*}
		M_{n,i}^{(\btheta,\bw)}\defin
		\frac{\big(F_i(\balpha+\bu\bvarphi_n^{(\balpha,\bbeta)})-F_i(\balpha)\big)G_i(\bbeta)}{G_i^2(\bbeta+\bv\bpsi_n^{(\bbeta)})}\,W_i^{(\bbeta)}
		+
		\frac{1}{2}\left(1-\frac{G_i^2(\bbeta)}{G_i^2(\bbeta+\bv\bpsi_n^{(\bbeta)})}\right)\big((W_i^{(\bbeta)})^2-1\big)	
		\end{eqnarray*}
		and
$$W_i^{(\bbeta)}\defin\frac{1}{G_i(\bbeta)}\int_{t_{i-1}}^{t_i}\sigma(\bbeta,t)\,d\BW_t.$$
Thus
\begin{eqnarray*}
R_{n,i}^{(\btheta,\bw)}=
\frac{-\big(F_i(\balpha+\bu\bvarphi_n^{(\balpha,\bbeta)})-F_i(\balpha)\big)^2}{2G_i^2\big(\bbeta+\bv\bpsi_n^{(\bbeta)}\big)}+ 
\frac{1}{2}\left(1-\frac{G_i^2(\bbeta)}{G_i^2(\bbeta+\bv\bpsi_n^{(\bbeta)})}
+
\ln \left(\frac{G^2_i(\bbeta)}{G^2_i(\bbeta+\bv\bpsi_n^{(\bbeta)})}\right)\right).
\end{eqnarray*}
Finally to approximate $M_{n,i}^{(\btheta,\bw)}$ we define the $d$-dimensional random vector $\Delta_n^{(\btheta)}\defin\sum_{i=1}^n\Delta_{n,i}^{\btheta}$ by
$$\Delta_{n,i}^{(\btheta)}\defin\left(\frac{\grad_{\balpha}F_i(\balpha)\bvarphi_n^{(\balpha,\bbeta)}}{G_i(\bbeta)}\,W_i^{(\bbeta)}\, ,\, \frac{\big(\grad_{\bbeta}\ln G^2_i(\bbeta)\big)\bpsi_n^{(\bbeta)}}{2}\,((W_i^{(\bbeta)})^2-1)\right).$$ 
The random vectors $\Delta_{n,i}^{(\btheta)}$, $i=1,\dots,n$ are independent with mean zero and variance $d\times d$-matrix given by 
$$\Var_{\btheta}\big[\Delta_{n,i}^{(\btheta)}\big]
=
\diag\left[\frac{\bvarphi_n^{(\balpha,\bbeta)}\grad_{\balpha}^{\etransp}F_i(\balpha)\,\grad_{\balpha}F_i(\balpha)\bvarphi_n^{(\balpha,\bbeta)}}{\ds G_i^2(\bbeta)}\,,\,\frac{\ds \bpsi_n^{(\bbeta)}\grad_{\bbeta}^{\etransp}\ln G^2_i(\bbeta)\,\grad_{\bbeta}\ln G^2_i(\bbeta)\,\bpsi_n^{(\bbeta)}}{\ds 2}\right].$$ 

\bigskip

Now let $\mcK$ be a compact subset of $\Theta$. 
Let	$\{\btheta_n=(\balpha_n,\bbeta_n)\}\subset\mcK$ and $\{\bw_n=(\bu_n,\bv_n)\}\subset\mbR^d$ such that $\btheta_n+\bw_n\Phi_n^{(\btheta_n)}\in\mcK$ and $\bw_n\to \bw=(\bu,\bv)\in\mbR^d$ as $n\to\infty$,

\bigskip
In Lemma~\ref{lemm:conv-L-Delta} in Appendix~A we prove that the random vector $\Delta_n^{(\btheta_n)}$ converges in distribution to the $d$-dimension standard Gaussian distribution, that is
$$\lim_{n\to\infty} \mcL_{\btheta_n}\big[\Delta_n^{(\btheta_n)}\big]=\mcN_{d}(0_{d},\mbI_{d\times d}).$$

Now we show that $M_n^{(\btheta_n,\bw_n)}-\Delta_{n}^{(\btheta_n)}\bw_n^{\transp}$
converges to 0 in quadratic mean. Indeed, from the independence of the Gaussian variables $W_i^{(\bbeta)}$, $i=1,\dots,n$ 
\begin{eqnarray*}
&&\Esp_{\btheta_n}\left[\left(M_n^{(\btheta_n,\bw_n)}-\Delta_{n}^{(\btheta_n)}\bw_n^{\transp}\right)^2\right]=\sum_{i=1}^{n}\Esp_{\btheta_n}\left[\left(M_{n,i}^{(\btheta_n,\bw_n)}-\Delta_{n,i}^{(\btheta_n)}\bw_n^{\transp}\right)^2\right]\\
&&=\,\sum_{i=1}^{n} \left(\frac{\big(F_i(\balpha_n+\bu_n\bvarphi_n^{(\balpha_n,\bbeta_n)})-F_i(\balpha_n)\big)G_i(\bbeta_n)}{G_i^2(\bbeta_n+\bv_n\bpsi_n^{(\bbeta_n)})}-\frac{\bu_n\bvarphi_n^{(\balpha_n,\bbeta_n)}\grad_{\balpha}^{\etransp}F_i(\balpha_n)}{G_i(\bbeta_n)}\right)^2\\
&&\quad+\,
\frac{1}{2}\sum_{i=1}^n\left(1-\frac{G_i^2(\bbeta_n)}{G_i^2(\bbeta_n+\bv_n\bpsi_n^{(\bbeta_n)})}-\bv_n\bpsi_n^{(\bbeta_n)}\grad_{\bbeta}^{\etransp}\ln G^2_i(\bbeta_n)\right)^2. 
\end{eqnarray*}
As $\bvarphi_n^{(\balpha_n,\bbeta_n)}=\big(T_n\bJ_p^{(\balpha_n,\bbeta_n)}\big)^{-1/2}$ and $\bpsi_n^{(\bbeta_n)}=\big(n\bJ_q^{(\bbeta_n)}\big)^{-1/2}$,
thanks to conditions~A1 and~A2 Taylor expansion expansion formula with integral remainder
gives us
\begin{eqnarray*}
&&\Esp_{\btheta_n}\left[\left(M_n^{(\btheta_n,\bw_n)}-\Delta_{n}^{(\btheta_n)}\bw_n^{\transp}\right)^2\right]=\sum_{i=1}^{n}\Esp_{\btheta_n}\left[\left(M_{n,i}^{(\btheta_n,\bw_n)}-\Delta_{n}^{(\btheta_n)}\bw_n^{\transp}\right)^2\right]\\
&&\qquad \leq\,
c\left(|\bu_n|^2\big|\big(\bJ_p^{(\balpha_n,\bbeta_n)}\big)^{-1/2}\big|^2+|\bv_n|^2\big|\big(\bJ_q^{(\bbeta_n)}\big)^{-1/2}\big|^2\right)\times\\
&&\qquad\quad\times\,\sup_t\left(
\sup_{|\balpha'-\balpha_n|\leq |\bu_n\bvarphi_n^{(\balpha_n,\bbeta_n)}|}\big|\grad_{\balpha}f(\balpha',t)-\grad_{\balpha}f(\balpha_n,t)\big|^2\right.\\
&&\qquad\qquad
+\, \left.\sup_{|\bbeta'-\bbeta_n|\leq |\bv_n\bpsi_n^{(\bbeta_n)}|}\left(\big|\sigma^2(\bbeta',t)-\sigma^2(\bbeta_n,t)\big|^2+\big|\grad_{\balpha}\sigma^2(\bbeta',t)-\grad_{\bbeta}\sigma^2(\bbeta_n,t)\big|^2\right)\right)
\end{eqnarray*}
where $c$ is some positive constant which can depend on $\mcK$.
Under condition~A1 and~A3, the positive functions $\btheta\mapsto \big|\big(\bJ_p^{(\balpha,\bbeta)}\big)^{-1/2}\big|$ and $\btheta\mapsto \big|\big(\bJ_q^{(\bbeta)}\big)^{-1/2}\big|$ are continuous in the compact subset $\mcK$, so they are bounded in $\mcK$.  
Since $|\bu_n\bvarphi_n^{(\balpha_n,\bbeta_n)}|$ and $|\bv_n\bpsi_n^{(\bbeta_n)}|$ converge to 0 as $n\to\infty$ and $\{\btheta_n\}\subset\mcK$,  condition~A1 implies that $M_n^{(\btheta_n,\bw_n)}-\Delta_{n}^{(\btheta_n)}\bw_n^{\transp}$ converges in quadratic mean to 0. 
Next, thanks again to Taylor expansion formula 
we have
\begin{eqnarray*}
&& \hspace{-20pt}
\left|R_{n}^{(\btheta_n,\bw_n)}+\frac{|\bw_n^{\transp}|^2}{2}\right|
\leq
c\left(|\bu_n|^2\big|\big(\bJ_p^{(\btheta_n)}\big)^{-1/2}\big|^2+|\bv_n|^2\big|\big(\bJ_q^{(\bbeta_n)}\big)^{-1/2}\big|^2\right)\times\\
&&\qquad\qquad\times\,
\sup_t\Big(\sup_{|\balpha'-\balpha|\leq|\bu_n\bvarphi_n^{(\balpha_n,\bbeta_n)}|}\big|\grad_{\balpha}f(\balpha',t)-\grad_{\balpha}f(\balpha,t)\big|^2\\
&&\qquad\qquad\quad +\,
\sup_{|\bbeta'-\bbeta|\leq|\bv_n\bpsi_n^{(\bbeta_n)}|}
\Big(\big|\sigma^2(\bbeta',t)-\sigma^2(\bbeta,t)\big|^2\\
&&\qquad\qquad\qquad\quad +\,
\big|\grad_{\balpha}\sigma^2(\bbeta',t)-\grad_{\bbeta}\sigma^2(\bbeta,t)\big|^2
+\big|\gradd_{\balpha}\sigma^2(\bbeta',t)-\gradd_{\bbeta}\sigma^2(\bbeta,t)\big|^2\Big)\Big).
\end{eqnarray*}

	Then we readily deduce that the random variable		
		$$r_n(\btheta_n,\bw_n)\defin M_n^{(\btheta_n,\bw_n)}+R_n^{(\btheta_n,\bw_n)}-\Delta_n^{(\btheta_n,\bw_n)}\bw^{\transp}+\frac{|\bw|^2}{2}$$
	 converges in $\Prob_{\btheta_n}$-probability to 0 as $n\to\infty$.
		This achieves the proof of the LAN property of the model. 
	\end{proof}

	\section{Efficient estimation}\label{sect:effi}
Cram\'er-Rao lower bound of the mean square risk  is not entirely satisfactory to define the asymptotic efficiency of a sequence of estimators. See e.g. Section I.9 in (Ibragimov Khasminskii 1981), see also Section 1.3 in (Kutoyants 2009).
Then we consider here the asymptotic optimality in the sense of local asymptotic minimax lower bound of the  risk of the sequence $\{\bar{\btheta}_n\} \defin\{\bar{\btheta}_n,n > 0\}$ for the estimation of $\btheta$, that is
\begin{eqnarray*}
		\mcR_{\btheta}(\{\bar{\btheta}_n\}) 
		\defin
		\lim_{\epsilon\to 0} \liminf_{n\to\infty}\sup_{|\btheta'-\btheta|\leq\epsilon}
     \Esp_{\btheta'}\left[L\left(\sqrt{T_n}(\bar{\balpha}_n-\balpha')\,,\,\sqrt{n}(\bar{\bbeta}_n-\bbeta')\right)\right]
		\end{eqnarray*} 
where $\bar{\btheta}_n$ is any  statistic function of the observation $\{ X_{t_i},i=0,\dots,n\}$ or, which is equivalent, of  $\{Y_i,i=0,\dots,n\}$.
The loss function $L(\cdot)$  belongs to  the set $\mcL$ of non-negative Borel functions on $\mbR^d$ which are continuous at $0$ with $L(0_d) = 0$, $L(-x)=L(x)$, the set $\{x:L(x)<c\}$ is a convex set for any $c>0$, and we also assume that the function $L(\cdot)\in\mcL$ admits a polynomial majorant.
Clearly all functions $L(\btheta) = |\btheta|^a$, $a>0$, as well as $L(\btheta) =1_{\{|\btheta|>a\}}$, $a > 0$, belong to $\mcL$. (Here $1_{\{x > a\}}$ denotes the indicator function of $(a,\infty)$.)

 
 Since the model of observation is	locally asymptotically normal then the local asymptotic minimax risk $\mcR_{\btheta}(\{\bar{\btheta}_n\}) $
		 for any sequence $\{\bar{\btheta}_n=(\bar{\balpha}_n,\bar{\bbeta}_n)\}$ of estimators of $\btheta=(\balpha,\bbeta)$ admits a lower bound for any loss function $L\in\mcL$. More precisely
		\begin{eqnarray}\label{eq:lower-bound}
		\mcR_{\btheta}(\{\bar{\btheta}_n\}) 
	\geq \Esp\big[L\big(\bxi^{(\btheta)}\big)\big]
		\end{eqnarray} 
		where $\bxi^{(\btheta)}$ is a random $d$-dimensional vector whose distribution is centered Gaussian with $d\times d$-matrix variance equal to $\big(\bJ^{(\btheta)}\big)^{-1}$
		(see Le Cam 1969 and Hajek 1972; see also Ibragimov Has'minskii 1981).

	\subsection{Maximum Likelihood estimator}\label{sect:effic-mle}		The maximum likelihood estimator $\what{\btheta}_n$ is any statistics  defined from the observation   such that
	$$ \what{\btheta}_n\in \arg\sup _{\btheta \in\Theta}\Lambda_n(\btheta).$$
	In the next theorem we establish that $\what{\btheta}_n$ is an efficient estimator of $\btheta$ in the sense that its asymptotic minimax risk $\mcR_{\btheta}(\{\what{\btheta}_n\})$ is equal to the lower bound $\Esp\big[L\big(\bxi^{(\btheta)}\big)\big]$.

	\begin{theorem}\label{theor:mle-effic-discret} Let $\Theta=\bA\times\bB$ be open, convex and bounded.
		Assume  that conditions~A1--A4 are fulfilled. 
			Then the maximum likelihood estimator  $\what{\btheta}_n=(\what{\balpha}_n,\what{\bbeta}_n)$ of $\btheta=(\balpha,\bbeta)$ is consistent. It is asymptotically normal uniformly with respect to $\btheta$ varying in any compact subset $\mcK$ of $\Theta=\bA\times\bB$:
			$$\lim_{n\to\infty}\mcL_{\btheta}\left[\left(\sqrt{T_n}(\what{\balpha}_n-\balpha),\sqrt{n}(\what{\bbeta}_n-\bbeta)\right)\right]=\mcN_d\big(0_d,(\bJ^{(\btheta)})^{-1}\big)$$
			where $\bJ^{(\btheta)}=\diag\big[\bJ_p^{(\balpha,\bbeta)},\bJ_q^{(\bbeta)}\big]$.
			Moreover it is locally asymptotically minimax at any $\btheta\in\Theta$ for any loss function $L(\cdot)\in\mcL$, in the sense that inequality~(\ref{eq:lower-bound}) becomes an equality for $\bar{\btheta}_n=\what{\btheta}_n$.
	\end{theorem}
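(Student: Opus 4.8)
The plan is to fit the problem into the program of Ibragimov and Has'minskii (1981, Chapter~III) and to check the hypotheses that, for a uniformly LAN family, yield at once the consistency of $\what{\btheta}_n$, its asymptotic normality with the stated limit, the convergence of its moments, and its local asymptotic minimaxity. For $\btheta\in\Theta$ put $Z_n^{(\btheta)}(\bw)\defin d\Prob^n_{\btheta+\bw\Phi_n^{(\btheta)}}/d\Prob^n_{\btheta}$, viewed as a random field in $\bw\in\mcW_{\btheta,n}$. Proposition~\ref{prop:lan}, applied with $\bw_n\equiv\bw$, shows that for any compact $\mcK\subset\Theta$ and any $\{\btheta_n\}\subset\mcK$ the finite-dimensional distributions of $Z_n^{(\btheta_n)}(\cdot)$ converge to those of the Gaussian-shift field $Z^{(\btheta)}(\bw)=\exp\big(\Delta^{(\btheta)}\bw\transp-\tfrac12|\bw|^2\big)$ with $\Delta^{(\btheta)}\sim\mcN_d(0_d,\mbI_{d\times d})$. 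It then remains to establish, uniformly in $\btheta\in\mcK$, two estimates on $Z_n^{(\btheta)}$: an $L^2$-Hölder bound for $\sqrt{Z_n^{(\btheta)}}$ on bounded sets, and an exponential tail bound $\Esp_\btheta\big[\sqrt{Z_n^{(\btheta)}(\bw)}\big]\le e^{-g_n(\bw)}$ for a suitable $g_n$. Granted these, the general theorems of Ibragimov and Has'minskii apply; the uniformity in $\mcK$ is inherited from the fact that A1--A2 are uniform in $t$ over all of $\Theta$ and A3--A4 are uniform in $\btheta$.

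\textbf{The $L^2$-continuity estimate.} Since $Y_1,\dots,Y_n$ are independent with $\mcL_\btheta[Y_i]=\mcN\big(F_i(\balpha),G_i^2(\bbeta)\big)$, one has the exact identity $\Esp_\btheta\big[\big|\sqrt{Z_n^{(\btheta)}(\bw_1)}-\sqrt{Z_n^{(\btheta)}(\bw_2)}\big|^2\big]=2\big(1-\prod_{i=1}^n A_i\big)\le\sum_{i=1}^n H_i^2$, where $A_i$ and $H_i^2$ are the Hellinger affinity and the squared Hellinger distance between the one-dimensional Gaussians $\mcN\big(F_i(\balpha+\bu_j\bvarphi_n^{(\balpha,\bbeta)}),G_i^2(\bbeta+\bv_j\bpsi_n^{(\bbeta)})\big)$, $j=1,2$, $\bw_j=(\bu_j,\bv_j)$. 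Using the closed form of $H_i^2$ for Gaussians, a mean-value expansion of $F_i$ in $\balpha$ and of $G_i^2$ in $\bbeta$, the bounds $\sup_{\balpha,t}|\grad_\balpha f|<\infty$, $\sup_{\bbeta,t}|\grad_\bbeta\sigma^2|<\infty$ and $0<\inf\sigma^2\le\sup\sigma^2<\infty$ of A1--A2 (so that $G_i^2(\bbeta)\asymp t_i-t_{i-1}$ and $|\grad_\balpha F_i|,|\grad_\bbeta G_i^2|\le c\,(t_i-t_{i-1})$), together with the exact identities $\sqrt{T_n}\,\bvarphi_n^{(\balpha,\bbeta)}=(\bJ_p^{(\balpha,\bbeta)})^{-1/2}$, $\sqrt n\,\bpsi_n^{(\bbeta)}=(\bJ_q^{(\bbeta)})^{-1/2}$ (whose norms are bounded on $\mcK$ by A3) and $\sum_i(t_i-t_{i-1})=T_n$, one obtains $\sum_i H_i^2\le c\,|\bw_1-\bw_2|^2$ with $c$ depending only on $\mcK$.

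\textbf{The tail estimate.} Since the affinity factorizes, $-\ln\Esp_\btheta\big[\sqrt{Z_n^{(\btheta)}(\bw)}\big]=\sum_{i=1}^n(-\ln A_i)\ge\tfrac12\sum_{i=1}^n H_i^2$, where now $(\balpha',\bbeta')\defin\btheta+\bw\Phi_n^{(\btheta)}$. Using $G_i^2\asymp t_i-t_{i-1}$, the boundedness of $\Theta$ (which makes each term of the two sums below bounded) and the elementary inequality $1-e^{-x}\ge c\min(x,1)$, one gets $\sum_i H_i^2\ge c\Big(\sum_i\tfrac{(F_i(\balpha)-F_i(\balpha'))^2}{t_i-t_{i-1}}+\sum_i\tfrac{(G_i^2(\bbeta)-G_i^2(\bbeta'))^2}{(t_i-t_{i-1})^2}\Big)$. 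The first sum is bounded below in two regimes: for $|\balpha-\balpha'|$ small, by a Taylor expansion and A3, which reproduces the quantity $\tfrac14|\bu|^2$ of the limiting field (consistently with $\Esp\sqrt{Z^{(\btheta)}(\bw)}=e^{-|\bw|^2/8}$); and for $|\balpha-\balpha'|\geq\nu$, by Assumption~A4 --- the first sum depending only on $(\balpha,\balpha')$, the $F$-part of A4 suffices. This gives $\sum_i\tfrac{(F_i(\balpha)-F_i(\balpha'))^2}{t_i-t_{i-1}}\geq c\,T_n\min\big(|\bu\bvarphi_n^{(\balpha,\bbeta)}|^2,1\big)\geq c\min(|\bu|^2,T_n)$, and symmetrically $\sum_i\tfrac{(G_i^2(\bbeta)-G_i^2(\bbeta'))^2}{(t_i-t_{i-1})^2}\geq c\min(|\bv|^2,n)$, so one may take $g_n(\bw)=c\big(\min(|\bu|^2,T_n)+\min(|\bv|^2,n)\big)$. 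Because $\mcW_{\btheta,n}$ extends only to order $\sqrt{T_n}$ in the $\bu$-directions and to order $\sqrt n$ in the $\bv$-directions, and the loss $L\in\mcL$ admits a polynomial majorant, this $g_n$ yields $\int_{\mcW_{\btheta,n}}(1+|\bw|)^N\,e^{-g_n(\bw)}\,d\bw\leq C<\infty$ with $g_n(\bw)\to\infty$ as $|\bw|\to\infty$, uniformly in $n$ and in $\btheta\in\mcK$ --- precisely the integrability required by the Ibragimov--Has'minskii theorems.

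\textbf{Conclusion, and the main difficulty.} Combining the finite-dimensional convergence from Proposition~\ref{prop:lan} with the two estimates above, the maximum-likelihood theorems of Ibragimov and Has'minskii (1981, Chapter~III) give in one stroke: the consistency of $\what{\btheta}_n$; the convergence $\mcL_\btheta\big[\big(\sqrt{T_n}(\what{\balpha}_n-\balpha),\sqrt n(\what{\bbeta}_n-\bbeta)\big)\big]\to\mcN_d\big(0_d,(\bJ^{(\btheta)})^{-1}\big)$ uniformly on compacts (the scaling $\sqrt{T_n},\sqrt n$ and the block-diagonal limit variance $\diag\big[(\bJ_p^{(\balpha,\bbeta)})^{-1},(\bJ_q^{(\bbeta)})^{-1}\big]$ matching $\Phi_n^{(\btheta)}$ up to the fixed matrices $\bJ_p,\bJ_q$); the convergence of moments; and hence $\mcR_\btheta(\{\what{\btheta}_n\})=\Esp\big[L\big(\bxi^{(\btheta)}\big)\big]$, i.e.\ equality in~(\ref{eq:lower-bound}). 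I expect the tail estimate to be the main obstacle: one must exhibit a single $g_n$ that is quadratic near the origin, so as to match the Gaussian limit, and eventually diverges, uniformly in $n$ and in $\btheta\in\mcK$; this forces one to glue the ``small $|\bw|$'' regime (Taylor expansion with A1, A3) to the ``large $|\bw|$'' regime (A2, A4), and in the latter to decouple the signal and the variance parts so that the one-sided identifiability in A4 can be applied separately along the $\bu$- and the $\bv$-coordinates.
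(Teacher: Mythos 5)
Your overall route is the paper's route: reduce the theorem to the conditions of Ibragimov--Has'minskii, Chapter~III (uniform LAN from Proposition~\ref{prop:lan}, a H\"older-type moment estimate on a fractional power of the likelihood ratio, and an exponential tail bound), and your tail estimate is essentially the paper's argument in Hellinger form: the identity $-\ln\Esp_{\btheta}\big[\sqrt{Z_n^{(\btheta,\bw)}}\big]=\sum_i(-\ln A_i)$ for independent Gaussian increments is exactly the $z=1/2$ case of the paper's Lemma~\ref{lemm:EZz}, and the two-regime treatment (Taylor expansion with A1--A3 near the origin, identifiability A4 far away, decoupled along the $\bu$- and $\bv$-blocks) matches steps (i)--(iv) of the paper's verification of condition B4. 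That part of your proposal is sound.

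The gap is in the continuity estimate. The Ibragimov--Has'minskii condition (B3 in the paper) requires a bound of the form $\Esp_{\btheta}\big[\big|\big(Z_n^{(\btheta,\bw_1)}\big)^{1/2m}-\big(Z_n^{(\btheta,\bw_2)}\big)^{1/2m}\big|^{2m}\big]\leq B\,|\bw_1-\bw_2|^{b}$ with H\"older exponent $b$ \emph{strictly greater than the parameter dimension} $d=p+q$; this is a Kolmogorov-type tightness requirement for the random field $\bw\mapsto Z_n^{(\btheta,\bw)}$ and cannot be weakened. Your Hellinger computation gives only the second moment of $\sqrt{Z_n^{(\btheta,\bw_1)}}-\sqrt{Z_n^{(\btheta,\bw_2)}}$ with exponent $b=2$, which satisfies $b>p+q$ only when $p+q=1$; the theorem is stated for arbitrary $p,q$. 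To cover $d\geq 2$ you must control moments of order $2m$ with $2m>p+q$, and this is precisely where the paper has to work: it writes $\big(Z_n^{(\btheta,\bw_2)}\big)^{1/2m}-\big(Z_n^{(\btheta,\bw_1)}\big)^{1/2m}$ as an integral of $\partial_s\Lambda_n$ along the segment joining $\bw_1$ to $\bw_2$, and bounds the $2m$-th moment of $\partial_s\Lambda_n$ --- a linear-plus-quadratic functional of independent Gaussians --- via its Laplace transform (Lemma~\ref{lemm:Laplace-Vn}), the quadratic (variance) part being the delicate term since it is not Gaussian. Your proposal contains no substitute for this high-order moment bound, so as written it proves the theorem only in the scalar-parameter case. (A secondary remark: you locate the ``main difficulty'' in the tail estimate, but that part goes through exactly as you describe; the missing ingredient is the one above.)
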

	
	\begin{proof}
	To prove this theorem, we state that in our framework the following
	conditions B1--B4 from Theorem 1.1 and  Corollary 1.1 of Chapter III in (Ibragimov and Has'minskii 1981) 
	are fulfilled. Denote by
	$Z_n^{(\btheta,\bw)}$ the likelihood ratio
	$$Z_n^{(\btheta,\bw)}\defin e^{\Lambda_n^{(\btheta,\bw)}}=\frac{d\Prob^{(n)}_{\btheta+\bw\Phi_n^{(\btheta)}}}{d\Prob^{(n)}_{\btheta}}\big((Y_0,\dots,Y_n)\big).$$
	Then we are going to establish the following properties
	\begin{itemize}
		\item[B1\,] 
		The family $ \{\Prob_{\btheta}^{(n)}, \ \btheta \in\Theta\}   $ is uniformly LAN in any compact  subset of $\Theta$.		
		\item[B2\,] 
		For  every $\btheta\in\Theta$, the $d\times d$-matrix $\Phi_n^{(\btheta)}$ is positive definite, $d=p+q$, and there exists a  continuous $d\times d$-matrix valued  function $(\btheta,\btheta')\mapsto B(\btheta,\btheta')$ such that for every compact subset $\mcK$ of $\Theta$ 
		$$\lim_{n\to\infty}\sup_{\btheta\in\mcK} \big|\Phi_n^{(\btheta)}\big|=0$$ 
		and
		$$\lim_{n\to\infty} \big(\Phi_n^{(\btheta)}\big)^{-1}\Phi_n^{(\btheta')}=B(\btheta,\btheta')$$ 
		where the lastest convergence is uniform with respect to $\btheta$ and $\btheta'$ varying in $\mcK$.
		
		\item[B3\,] 
		For every compact subset $\mcK$ of $\Theta$, there exist $b>p+q$, $m>0$, $B=B(\mcK)>0,  a=a(\mcK) \in \mbR$, such that
		\begin{equation*}
		\sup_{\btheta \in\mcK}\sup_{\bw_1,\bw_2\in \mcW_{\btheta,r,n}}|\bw_1-\bw_2|^{-b}\Esp_{\btheta}\Big[\Big(\big(Z_n^{(\btheta,\bw_1)}\big)^{1/2m}-\big(Z_n^{(\btheta,\bw_2)}\big)^{1/2m}\Big)^{2m}\Big]< B(1+ r^a)
			\end{equation*}
		for any $r>0$. Here 
		$\mcW_{\btheta,r,n}\defin\{\bw\in\mbR^d: |\bw|<r\,\,\mbox{and}\,\,\btheta+\bw\Phi_n^{(\btheta)}\in\Theta\}$.
		\item[B4\,] 
		For any compact subset  $\mcK$ of $\Theta$, and for every $N>0$, there exists $n_1=n_1(N,\mcK)>0$ such that
		\begin{equation*}
		\sup_{\btheta\in\mcK}\ \sup_{n>n_1}\ \sup_{\bw\in \mcW_{\btheta,n}}|\bw|^N\Esp_{\btheta}\left[\big(Z_n^{(\btheta,\bw)}\big)^{1/2}\right]<\infty.	
		\end{equation*}
		Recall that
		$\mcW_{\btheta,n}\defin\{\bw\in\mbR^d: \btheta+\bw\Phi_n^{(\btheta)}\in\Theta\}$.
			\end{itemize}

		In Proposition~\ref{prop:lan} we have stated that the family  $\{\Prob_{\btheta}^{(n)}, \, \btheta \in\Theta\}$ is uniformly LAN in any compact  subset of $\Theta$ (condition B1). In addition, as   $\Phi_n^{(\btheta)}=\diag\big[\bvarphi_n^{(\balpha,\bbeta)},\bpsi_n^{(\bbeta)}\big]$, $\bvarphi_n^{(\balpha,\bbeta)}\defin\big(T_n \bJ_p^{(\balpha,\bbeta)}\big)^{-1/2}$ and $\bpsi_n^{(\bbeta)}\defin\big(n \bJ_q^{(\bbeta)}\big)^{-1/2}$, from the continuity of $\btheta\mapsto\bJ_p^{(\balpha,\bbeta)}$ and $\bbeta\mapsto\bJ_q^{(\bbeta)}$ we 		deduce  that   condition B2 is fulfilled with 
		$$B(\btheta,\btheta')=\diag\left[\big(\bJ_p^{(\balpha,\bbeta)}\big)^{1/2}\big(\bJ_p^{(\balpha',\bbeta')}\big)^{-1/2}\,,\,\big(\bJ_q^{(\bbeta)}\big)^{1/2}\big(\bJ_q^{(\bbeta)}\big)^{-1/2}\right].$$
		
		\bigskip
	Now we check condition B3. Let the compact subset $\mcK\subset\Theta$ the integer $m>0$ and $r>0$ be fixed.
As $\Esp_{\btheta}\big[Z_n^{(\btheta,\bw_1)}\big]=\Esp_{\btheta}\big[Z_n^{(\btheta,\bw_2)}\big]=1$, we have
		\begin{eqnarray}\label{eq:Z_n(1)-Z_n(2)}
	&&\Esp_{\btheta}\Big[\Big|\big(Z_n^{(\btheta,\bw_1)}\big)^{1/{2m}}-\big(Z_n^{(\btheta,\bw_2)}\big)^{1/{2m}}\Big|^{2m}\Big]\nonumber\\
	&&\qquad =\,
	\sum_{k=0}^{2m}(-1)^{{2m}-k}\Big(^{2m}_{\,\,\,k}\Big)\Esp_{\btheta}\Big[\big(Z_n^{(\btheta,\bw_1)}\big)^{k/{2m}}\big(Z_n^{(\btheta,\bw_2)}\big)^{({2m}-k)/{2m}}\Big]\Big)\\
	&&\qquad \leq\,
\sum_{k=0}^{2m}\Big(^{2m}_{\,\,\,k}\Big)\Esp_{\btheta}\Big[Z_n^{(\btheta,\bw_1)}\big]^{k/{2m}}\big|\Esp_{\btheta}\Big[Z_n^{(\btheta,\bw_2)}\big]^{({2m}-k)/{2m}}=2^{2m}.\nonumber
		\end{eqnarray}
	Thus we deduce that 
	\begin{eqnarray}\label{ineq:B3-1}
|\bw_1-\bw_2|^{-b}	\Esp_{\btheta}\Big[\Big(\big(Z_n^{(\btheta,\bw_1)}\big)^{1/{2m}}-\big(Z_n^{(\btheta,\bw_2)}\big)^{1/{2m}}\Big)^{2m}\Big]
	\leq 2^{2m}|\bw_1-\bw_2|^{-b}\leq 2^{2m}R^{-b}
	\end{eqnarray}
for any $b>0$, any $R>0$ and 	for any $\bw_1,\bw_2\in\mcW_{\btheta,n}$ such that $|\bw_2-\bw_1|\geq R$.
Henceforth we choose $R=1>0$ and we  consider that $|\bw_2-\bw_1|<1$.
Assumption~A1 entails that
\begin{eqnarray*}
&&
\Esp_{\btheta}\Big[\Big(\big(Z_n^{(\btheta,\bw_1)}\big)^{1/2m}-\big(Z_n^{(\btheta,\bw_2)}\big)^{1/2m}\Big)^{2m}\Big]\\
&&\qquad =\,
\Esp_{\btheta}\left[\left(\int_0^1\partial_s \exp\Big\{\frac{1}{2m}\Lambda_n\big(\btheta+(\bw_1+s(\bw_2-\bw_1))
\Phi_n^{(\btheta)}\big)\Big\}\,ds\right)^{2m}\right]\\
&&\qquad =\,
(2m)^{-2m}\int_0^1\Esp_{\btheta+(\bw_1+s(\bw_2-\bw_1))\Phi_n^{(\btheta)}} \left[\left(\partial_s\Lambda_n\big(\btheta+(\bw_1+s(\bw_2-\bw_1))
\Phi_n^{(\btheta)}\big) \right)^{2m}\right] ds.
\end{eqnarray*}
Now let
$$W_i^{(\bbeta,\bv_1,\bv_2,s)}
\defin
\frac{1}{G\big(\bbeta+(\bv_1+s(\bv_2-\bv_1))\bpsi_n^{(\bbeta)}\big)}\int_{t_{i-1}}^{t_i}\sigma\big(\bbeta+(\bv_1+s(\bv_2-\bv_1))\bpsi_n^{(\bbeta)}\,,\,t\big)\,d\BW_t$$
$$U_n^{(\btheta,\bw_1,\bw_2,s)}
\defin
\bvarphi_n^{(\balpha,\bbeta)}\sum_{i=1}^n\frac{\grad_{\balpha}^{\etransp}F_i\big(\balpha+(\bu_1+s(\bu_2-\bu_1))\bvarphi_n^{(\balpha,\bbeta)}\big)}{G_i\big(\bbeta+(\bv_1+s(\bv_2-\bv_1))\bpsi_n^{(\bbeta)}\big)}\,W_i^{(\bbeta,\bv_1,\bv_2,s)}$$
and
$$V_n^{(\bbeta,\bw_1,\bw_2,s)}
\defin
\bpsi_n^{(\bbeta)}\sum_{i=1}^n\frac{\grad_{\bbeta}^{\etransp}G_i^2\big(\bbeta+(\bv_1+s(\bv_2-\bv_1))\bpsi_n^{(\bbeta)}\big)}{2G_i^2\big(\bbeta+(\bv_1+s(\bv_2-\bv_1))\bpsi_n^{(\bbeta)}\big)}
\left(\big(W_i^{(\bbeta,\bv_1,\bv_2,s)}\big)^2-1\right).
$$
When 
\begin{eqnarray*}
&&Y_i=F_i\big(\balpha+(\bu_1+s(\bu_2-\bu_1))\bvarphi_n^{(\balpha,\bbeta)}\big)+\int_{t_{i-1}}^{t_i}\sigma\big(\bbeta+(\bv_1+s(\bv_2-\bv_1))\bpsi_n^{(\bbeta)}\,,\,t\big)\,d\BW_t\\
&&\quad =\,
F_i\big(\balpha+(\bu_1+s(\bu_2-\bu_1))\bvarphi_n^{(\balpha,\bbeta)}\big)+G_i\big(\bbeta+(\bv_1+s(\bv_2-\bv_1))\bpsi_n^{(\bbeta)}\big)\,W_i^{(\bbeta,\bv_1,\bv_2,s))},
\end{eqnarray*}
expression~(\ref{eq:log-likelihood}) of the log-likelihood implies that
\begin{eqnarray*}
&&\partial_s\Lambda_n\big(\btheta+(\bw_1+s(\bw_2-\bw_1))\Phi_n^{(\btheta)}\big)
=
(\bu_2-\bu_1)U_n^{(\btheta,\bw_1,\bw_2,s)}+(\bv_2-\bv_1)V_n^{(\bbeta,\bw_1,\bw_2,s)}.
\end{eqnarray*}
Thus
\begin{eqnarray*}
&&
\left(\Esp_{\btheta+(\bw_1+s(\bw_2-\bw_1))\Phi_n^{(\btheta)}} \left[\left(\partial_s\Lambda_n\big(\btheta+(\bw_1+s(\bw_2-\bw_1))\Phi_n^{(\btheta)}\big) \right)^{2m}\right] \right)^{1/2m}\\
&&\qquad \leq\,
\left(\Esp_{\btheta+(\bw_1+s(\bw_2-\bw_1))\Phi_n^{(\btheta)}} 
\left[\left( (\bu_2-\bu_1)U_n^{(\bbeta,\bv_1,\bv_2,s)}\right)^{2m}
\right] \right)^{1/2m}\\
&&\qquad\quad +\,
\left(\Esp_{\btheta+(\bw_1+s(\bw_2-\bw_1))\Phi_n^{(\btheta)}} 
\left[\left( (\bv_2-\bv_1) V_n^{(\bbeta,\bv_1,\bv_2,s)}\right)^{2m}
\right] \right)^{1/2m}
\end{eqnarray*}
Since the random variables $W_i^{(\bbeta,\bv_1,\bv_2,s)}$, $i=1,\dots,n$ are independant with the same standard Gaussian distribution $\mcN(0,1)$, the random variable $(\bu_2-\bu_1)U_n^{(\bbeta,\bv_1,\bv_2,s)}$ is Gaussian with variance
\begin{eqnarray*}
&&\Esp_{\btheta+(\bw_1+s(\bw_2-\bw_1))\Phi_n^{(\btheta)}} 
\left[\left( (\bu_2-\bu_1)U_n^{(\bbeta,\bv_1,\bv_2,s)}\right)^{2}\right]\\
&&\qquad=\,
\sum_{i=1}^n\left((\bu_2-\bu_1)\bvarphi_n^{(\balpha,\bbeta)}\frac{\grad_{\balpha}^{\etransp}F_i\big(\balpha+(\bu_1+s(\bu_2-\bu_1))\bvarphi_n^{(\balpha,\bbeta)}\big)}{G_i\big(\bbeta+(\bv_1+s(\bv_2-\bv_1))\bpsi_n^{(\bbeta)}\big)}\right)^2.
\end{eqnarray*}
Recall that $\bvarphi_n^{(\balpha,\bbeta)}=\big(T_n\bJ_p^{(\balpha,\bbeta)}\big)^{-1/2}$.
Then the moment of order $2m$ of the random variable $(\bu_2-\bu_1)U_n^{(\bbeta,\bv_1,\bv_2,s)}$
is equal to
\begin{eqnarray*}
&&\frac{(2m)!}{2^mm!}\left(\sum_{i=1}^n\left((\bu_2-\bu_1)\bvarphi_n^{(\balpha,\bbeta)}\frac{\grad_{\balpha}^{\etransp}F_i\big(\balpha+(\bu_1+s(\bu_2-\bu_1))\bvarphi_n^{(\balpha,\bbeta)}\big)}{G_i\big(\bbeta+(\bv_1+s(\bv_2-\bv_1))\bpsi_n^{(\bbeta)}\big)}\right)^2\right)^{m}\\
&&\qquad\leq\,
\frac{(2m)!}{2^mm!}
|\bu_2-\bu_1|^{2m}\big|\big(J_p^{(\btheta)}\big)^{-1/2}\big|^{2m}
\sup_{\balpha,\bbeta}\left|\frac{1}{T_n}\sum_{i=1}^n\frac{\grad_{\balpha}^{\etransp}F_i(\balpha)\grad_{\balpha}F_i(\balpha)}{G_i^2(\bbeta)}\right|^{m}\\
&&\qquad\leq\,
\frac{(2m)!}{2^mm!}
|\bu_2-\bu_1|^{2m}\big|\big(J_p^{(\btheta)}\big)^{-1/2}\big|^{2m}
\frac{\sup_{\balpha',t}|\grad_{\balpha}f(\balpha',t)|^{2m}}{\inf_{\bbeta',t}\sigma^{2m}(\bbeta',t)}.
\end{eqnarray*}

To estimate the moment of order $2m$ of  the random variable  $(\bv_2-\bv_1)V_n^{(\bbeta,\bv_1,\bv_2,s)}$ we can compute the Laplace function 
$$L^{(V)}_n(z)=\Esp_{\btheta+(\bw_1+s(\bw_2-\bw_1))\Phi_n^{(\btheta)}}\left[\exp\left\{z(\bv_2-\bv_1)V_n^{(\bbeta,\bv_1,\bv_2,s)}\right\}\right]$$ 
of this random variable and  we apply the well-known relationship between the  moment of order $2m$ and the $2m$-th derivative of the Laplace function at 0 : 
$$\partial_z^{2m}L^{(V)}_n(0)=\Esp_{\btheta+(\bw_1+s(\bw_2-\bw_1))\Phi_n^{(\btheta)}}\left[\left((\bv_2-\bv_1)V_n^{(\bbeta,\bv_1,\bv_2,s)}\right)^{2m}\right].$$
 This is done  in Appendix A, and Lemma~\ref{lemm:Laplace-Vn} ensures that there exists $n_0>0$ such for every integers $n>n_0$ and  $m\geq 1$
\begin{eqnarray*}
&&|\partial_z^{2m}L^{(V)}_n(0)|=\Esp_{\btheta+(\bw_1+s(\bw_2-\bw_1))\Phi_n^{(\btheta)}}\left[\left((\bv_2-\bv_1)V_n^{(\bbeta,\bv_1,\bv_2,s)}\right)^{2m}\right]\\
&&\qquad\quad \leq\,
  c_{2m}|\bv_2-\bv_1|^{2m}\times \big|\big(\bJ_q^{(\bbeta)}\big)^{-1/2}\big|^{2m}\times\frac{\sup_{\bbeta',t}\big|\grad_{\bbeta}\sigma^2(\bbeta',t)\big|^{2m}}{\inf_{\bbeta',t}\sigma^{4m}(\bbeta',t)}
\end{eqnarray*}
where $c_{2m}$ is some constant value depending only on $m$.

Then for any $n\geq n_0$
\begin{eqnarray*}
&&
\Esp_{\btheta}\Big[\Big(\big(Z_n^{(\btheta,\bw_1)}\big)^{1/2m}-\big(Z_n^{(\btheta,\bw_2)}\big)^{1/2m}\Big)^{2m}\Big]^{1/2m}\\
&&\qquad \leq 
c_{m,1}|\bu_2-\bu_1|\times \big|\big(\bJ_p^{(\balpha,\bbeta)}\big)^{-1/2}\big|\times\frac{\sup_{\balpha',t}\big|\grad_{\balpha} f(\balpha',t)\big|}{\inf_{\bbeta',t}\sigma^{2}(\bbeta',t)}\\
&&\qquad\quad +\,
c_{m,2}|\bv_2-\bv_1|\times \big|\big(\bJ_q^{(\bbeta)}\big)^{-1/2}\big|\times\frac{\sup_{\bbeta',t}\big|\grad_{\bbeta}\sigma^2(\bbeta',t)\big|}{\inf_{\bbeta',t}\sigma^{2}(\bbeta',t)}
\end{eqnarray*}
where $c_{1,m}\geq 0$ and $c_{2,m}\geq 0$ are two constant values depending only on $m$.
Since $(\balpha,\bbeta)\mapsto\big|\big(J_p^{(\balpha,\bbeta)}\big)^{-1/2}\big|$ and $(\balpha,\bbeta)\mapsto\big|\big(J_q^{(\bbeta)}\big)^{-1/2}\big|$ are positive continuous functions on the compact set  $\mcK$ we deduce that
\begin{equation}\label{ineq:B3-2}
|\bw_2-\bw_1|^{-2m}\Esp_{\btheta}\Big[\Big(\big(Z_n^{(\btheta,\bw_1)}\big)^{1/2m}-\big(Z_n^{(\btheta,\bw_2)}\big)^{1/2m}\Big)^{2m}\Big]
\leq c_{m}
\end{equation}
for any $\btheta=(\balpha,\bbeta)\in\mcK$, for any $w_1,w_2\in\mcW_{\btheta,n}$ such that $|\bw_2-\bw_1|\leq 1$ and for any $n\geq n_0$. Here $c_m$ is some constant which depends on $m$ and $\mcK$.

From inequalities~(\ref{ineq:B3-1}) and~(\ref{ineq:B3-2}) we readily deduce that condition B3 is satisfied with $b=2m> p+q$, $a=0$, and at least for any $n\geq n_0$.

		\bigskip
		Finally, we establish that condition B4 is fulfilled. To do that we study the term $\Esp_{\btheta}\left[\big(Z_n^{(\btheta,\bw)}\big)^{1/2}\right]$, first in the case  $|\bw\Phi_n^{(\btheta)}|$ is "small" for which we use Taylor expansion formula (assumptions A1, A2) and then in the case  $|\bw\Phi_n^{(\btheta)}|$ is "large" for which we use the identifiability condition A4.
Thanks to equality~(\ref{eq:EZ1/2}),
\begin{eqnarray*}
&&\ln\Esp_{\btheta}\left[e^{\frac{1}{2}\left(\Lambda_n(\btheta+\bmu)-\Lambda_n(\btheta)\right)}\right]\\
&&\qquad \leq\,
-
\sum_{i=1}^n\frac{\big(F_i(\balpha+\bdelta)-F_i(\balpha)\big)^2}{8\sup_{\bbeta,t}\sigma^2(\bbeta,t)(t_i-t_{i-1})}
-\,\sum_{i=1}^n\frac{\big(G_i^2(\bbeta+\bgamma)-G_i^2(\bbeta)\big)^2}{16\sup_{\bbeta,t}\sigma^4(\bbeta,t)(t_i-t_{i-1})^2}\\
&&\qquad \leq\,
-\,
\frac{\inf_{\bbeta,t}\sigma^2(\bbeta,t)}{8\sup_{\bbeta,t}\sigma^2(\bbeta,t)}\sum_{i=1}^n\frac{\big(F_i(\balpha+\bdelta)-F_i(\balpha)\big)^2}{ G_i^2(\bbeta)}\\ 
&& \qquad\quad\, -\,
\frac{\inf_{\bbeta,t}\sigma^4(\bbeta,t)}{16\sup_{\bbeta,t}\sigma^4(\bbeta,t)}\sum_{i=1}^n\left(1-\frac{G_i^2(\bbeta)}{G_i^2(\bbeta+\bgamma)}\right)^2
\end{eqnarray*}
for any $\btheta$ and $\btheta+\bmu\in\Theta$ where $\btheta=(\balpha,\bbeta)$ and $\bmu=(\bdelta,\bgamma)$.

\bigskip
(i)  
From assumptions~A1, A2 and~A3 with Taylor expansion formula, there exist $\nu>0$ and $n_1>0$ such that for every $n>n_1$, $\btheta=(\balpha,\bbeta)\in\mcK$ and $\bw\in\mcW_{\btheta,n}$ such that $|\bw\Phi_n^{(\btheta)}|<\nu$, we have
\begin{eqnarray*}
\sum_{i=1}^n\frac{\big(F_i(\balpha+\bu\bvarphi_n^{(\balpha,\bbeta)})-F_i(\balpha)\big)^2}{G_i^2(\bbeta)}
\geq\frac{|\bu|^2}{2}
\end{eqnarray*}
and 
\begin{eqnarray*}
\sum_{i=1}^n\left(1-\frac{G_i^2(\bbeta)}{G_i^2(\bbeta+\bv\bpsi_n^{(\bbeta)})}\right)^2
\geq \frac{|\bv|^2}{2}.
\end{eqnarray*}
Thus
$$\ln\Esp_{\btheta}\left[\exp\Big\{\frac{1}{2}\left(\Lambda_n(\btheta+\bw\Phi_n^{(\btheta)})-\Lambda_n(\btheta)\right)\Big\}\right]\leq -c_1(\nu)|\bw|^2$$
 where
$$c_1(\nu)\defin \min\left\{\frac{\inf_{\bbeta,t}\sigma^2(\bbeta,t)}{16\sup_{\bbeta,t}\sigma^2(\bbeta,t)}\,,\,\frac{\inf_{\bbeta,t}\sigma^4(\bbeta,t)}{32\sup_{\bbeta,t}\sigma^4(\bbeta,t)}\right\}>0.$$

\bigskip
(ii) Besides from the identifiability condition~A4,  for every $\nu>0$, there exist $\mu_{\nu}>0$ and $n_{\nu}>0$ such that for $n>n_{\nu}$, $\btheta=(\balpha,\bbeta)\in\mcK$  and $\bmu=(\bdelta,\bgamma)$ with $\btheta+\bmu\in\Theta$ and $|\bmu|\geq\nu$  we have
\begin{eqnarray*}
\frac{1}{T_n}\sum_{i=1}^n \frac{\big(F_i(\balpha+\bdelta)-F_i(\balpha)\big)^2}{t_i-t_{i-1}}
\geq 
\mu_{\nu}.
\end{eqnarray*}
 Let $\bw=(\bu,\bv)\in\mcW_{\btheta,n}$ such that $\big|\bw\Phi_n^{(\btheta)}|\geq \nu$. As $\big|\bu\bvarphi_n^{(\balpha,\bbeta)}|\leq\diam(\bA)$, we deduce that
\begin{eqnarray*}
&&
\sum_{i=1}^n
\frac{\big(F_i(\balpha+\bu\bvarphi_n^{(\balpha,\bbeta)})-F_i(\balpha)\big)^2}{t_i-t_{i-1}}
\geq
\frac{T_n\,\mu_{\nu}\big|\bu\bvarphi_n^{(\balpha,\bbeta)}|^2}{\diam(\bA)^2}
 \geq
\frac{\mu_{\nu}\,|\bu|^2}{\diam(\bA)^2\big|\big(\bJ_p^{(\balpha,\bbeta)}\big)^{1/2}\big|^2}>0.
\end{eqnarray*} 
Notice that we have used the relation $\big|\bu\big(\bJ_p^{(\balpha,\bbeta)}\big)^{-1/2}\big|^2\geq |\bu|^2\big|\big(\bJ_p^{(\balpha,\bbeta)}\big)^{1/2}\big|^{-2}$.
Let
$$c_2(\nu)\defin\frac{\mu_{\nu}}{8\,\diam(\bA)^2\sup_{\btheta\in\mcK}\big|\big(\bJ_p^{(\balpha,\bbeta)}\big)^{1/2}\big|^{2}\sup_{\bbeta,t} \sigma^2(\bbeta,t)}>0.
$$
Then for every $n\geq\eta\defin \max\{n_1,n_{\nu}\}$ 
		\begin{eqnarray}\label{B4-F}
		\sum_{i=1}^n\frac{\big(F_i(\balpha+\bu\bvarphi_n^{(\balpha,\bbeta)})-F_i(\balpha)\big)^2}{4\big(G_i^2(\bbeta+\bv\bpsi_n^{(\bbeta)})+G_i^2(\bbeta)\big)}
\geq\min\{c_1(\nu),c_2(\nu)\}|\bu|^2.
\end{eqnarray}

\bigskip
(iii) From the identifiability condition~A4, for $n\geq n_{\nu}$ and $|\bmu|\geq\nu$  we have
\begin{eqnarray*}
\frac{1}{n}\sum_{i=1}^n \frac{\big(G_i^2(\bbeta+\bgamma)-G^2_i(\bbeta)\big)^2}{(t_i-t_{i-1})^2}
\geq \mu_{\nu}.
\end{eqnarray*}
Let $\bw=(\bu,\bv)\in\mcW_{\btheta,n}$ such that $\big|\bw\Phi_n^{(\btheta)}|\geq \nu$. Since $\big|\bv\bpsi_n^{(\bbeta)}|\leq\diam(\bB)$ we have
\begin{eqnarray*}
&&\sum_{i=1}^n\frac{\big(G_i^2(\bbeta)-G_i^2(\bbeta+\bv\bpsi_n^{(\bbeta)})\big)^2}{(t_i-t_{i-1})^2} \geq
\frac{n\,\mu_{\nu}\big|\bv\bpsi_n^{(\bbeta)}|^2}{\diam(\bB)^2}
 \geq
\frac{\mu_{\nu}\,|\bv|^2}{\diam(\bB)^2\big|\big(\bJ_q^{(\bbeta)}\big)^{1/2}\big|^2}.
\end{eqnarray*} 
Let
$$c_3(\nu)\defin
\frac{\mu_{\nu}}{16\,\diam(\bB)^2 \sup_{\btheta\in\mcK}\big|\big(\bJ_q^{(\bbeta)}\big)^{1/2}\big|^{2}\sup_{\bbeta,t} \sigma^4(\bbeta,t)}>0.
$$
Hence for every $n>\eta$  
\begin{eqnarray}\label{B4-G}
		&&\sum_{i=1}^n\int_{G_i^2(\bbeta)}^{G_i^2(\bbeta+\bv\bpsi_n^{(\bbeta)})}\frac{G_i^2(\bbeta+\bv\bpsi_n^{(\bbeta)})-x}{4x\big(x+G_i^2(\bbeta+\bv^{\transp}\bpsi_n^{(\bbeta)})\big)}\,dx
\geq \min\{c_1(\nu),c_3(\nu)\}|\bv|^2.
\end{eqnarray}

(iv)		Denote $$c\defin \min\{c_1(\nu),c_2(\nu),c_3(\nu)\}>0.$$
		So thanks to  inequalities (\ref{B4-F})	and (\ref{B4-G}), for $n> \eta$,
				$\btheta\in\mcK$ and  $\bw=(\bu,\bv)\in\mcW_{\btheta,n}$ 
				such that $\big|\bw\Phi_n^{(\btheta)}|\geq \nu$ we obtain that
		$$\Esp_{\btheta}\left[\big(Z_n^{(\btheta,\bw)}\big)^{1/2}\right]
		\leq
		e^{-c|\bw|^2}.$$
		As
		$$\lim_{|\bw|\to\infty}\bw^Ne^{-c|\bw|^2}=0,$$
		we deduce that for all $N>0$  
		$$\sup_{\btheta\in\mcK}\sup_{n>\eta}\sup_{\bw\in \mcW_{\btheta,n}}|\bw|^N\Esp_{\btheta}\left[\big(Z_n^{(\btheta,\bw)}\big)^{1/2}\right]<\infty.$$
		Thus condition B4 is satisfied. 
		This achieves the proof of the theorem.
	\end{proof}
\subsection{Bayesian estimator}\label{sect:effic-Bayes}

Here the unknown parameter $\btheta=(\balpha,\bbeta)$ is supposed to be a random vector with known prior density distribution $\pi(\cdot)$ on the parameter set $\Theta=\bA\times\bB$.
We are going to study the property of the  Bayesian estimator $\wtilde{\btheta}_n$ that minimizes the mean Bayesian risk defined as
	$$R_n(\bar{\btheta}_n)\defin\int_{\Theta}\Esp_{\btheta}\Big[l\big((\bar{\btheta}_n-\btheta)\bdelta_n\big)\Big]\pi(\btheta)\,d\btheta,$$
		where  for simplicity of presentation the loss function 	$l(\cdot)$ is
		 equal to $l(\btheta)=|\btheta|^a$ for some $a>0$ (see e.g. Ibragimov and Has'minskii 1981).
		Here	 $\bdelta_n=\diag\big[\sqrt{T_n}\mbI_{p\times p},\sqrt{n}\mbI_{q\times q}\big]$.
		From  Fubini theorem we can write
	$$R_n(\bar{\btheta}_n)=\Esp_{\btheta_o}\left[\int_{\Theta}l\big((\bar{\btheta}_n-\btheta)\bdelta_n\big)L_n^{(\btheta_o,\btheta)}\pi(\btheta)\,d\btheta\right]$$
	for any fixed value $\btheta_o$ of $\Theta$, where
	$L_n^{(\btheta_o,\btheta)}$ is the likelihood ratio.
	$$L_n^{(\btheta_o,\btheta)}\defin	\frac{d\Prob^{(n)}_{\btheta}}{d\Prob^{(n)}_{\btheta_o}}\big(Y_0,Y_1,\dots,Y_n\big).$$
	If there exists an estimator $\tilde{\btheta}_n$ which minimizes
	 $$\int_{\Theta} l\big((\bar{\btheta}_n-\btheta)\bdelta_n\big)L_n^{(\btheta_o,\btheta)}\pi(\btheta)\,d\btheta$$
	then it will be Bayesian.
	For a quadratic loss function $(a=2)$ this minimization gives the expression of the Bayesian estimator through a conditional expectation
	$$\wtilde{\btheta}_n=\int_{\Theta}\btheta\, \pi\big(\btheta\,|\,Y_0,\dots, Y_n\big)\,d\btheta$$
	where 
	$$\pi\big(\btheta\,|\,Y_0,\dots, Y_n\big)\defin\frac {L_n^{(\btheta_o,\btheta)}\pi(\btheta)}{\int_{\Theta}L_n^{(\btheta_o,\btheta)}\pi(\btheta)\,d\btheta}.$$
	
	Then, from Theorem~2.1 in Chapter III of (Ibragimov and Has\'minskii 1981) we state that 

	\begin{theorem}\label{theor:Bayes-effic-discret}
		Let $\Theta=\bA\times\bB$ be open convex and bounded. Assume that the conditions of Theorem~\ref{theor:mle-effic-discret} are fulfilled. Assume that  the prior density $\pi(\btheta)$ is continuous and positive on $\Theta$ and that the loss function  $l(\btheta)=|\btheta|^a$ for some $a>0$.  
	Then, uniformly with respect to $\btheta=(\balpha,\bbeta)$ varying in any compact subset $\mcK$ of $\Theta$, the corresponding Bayesian estimator $\wtilde{\btheta}_n=(\wtilde{\balpha}_n,\wtilde{\bbeta}_n)$ converges in probability and  is asymptotically normal:
	$$\lim_{n\to\infty}\mcL_{\btheta}\left[\left(\sqrt{T_n}(\wtilde{\balpha}_n-\balpha),\sqrt{n}(\tilde{\bbeta}_n-\bbeta)\right)\right]=\mcN_d\big(0_d,(\bJ^{(\btheta)})^{-1}\big).$$
			Moreover, the Bayesian estimator $\wtilde{\btheta}_n$ 
			is locally asymptotically minimax at any $\btheta\in\Theta$ for any loss function $L(\cdot)\in\mcL$, in the sense that inequality~(\ref{eq:lower-bound}) becomes an equality for $\bar{\btheta}_n=\wtilde{\btheta}_n$.
\end{theorem}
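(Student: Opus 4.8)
The plan is to deduce Theorem~\ref{theor:Bayes-effic-discret} directly from Theorem~2.1 of Chapter~III in (Ibragimov and Has'minskii 1981), in exactly the same spirit as Theorem~\ref{theor:mle-effic-discret} was deduced from Theorem~1.1 and Corollary~1.1 of that chapter. The key observation is that the hypotheses of the Bayesian theorem are again the conditions B1--B4 on the likelihood ratio process, now supplemented only by regularity requirements on the prior density and on the loss function. Conditions B1--B4 have already been established, under assumptions A1--A4 and for the local normalizing matrix $\Phi_n^{(\btheta)}=\diag\big[\bvarphi_n^{(\balpha,\bbeta)},\bpsi_n^{(\bbeta)}\big]$, in the proof of Theorem~\ref{theor:mle-effic-discret}: B1 is Proposition~\ref{prop:lan}, B2 follows from the continuity of $\btheta\mapsto\bJ_p^{(\balpha,\bbeta)}$ and $\bbeta\mapsto\bJ_q^{(\bbeta)}$, B3 is the H\"older-type estimate on the increments of $(Z_n^{(\btheta,\bw)})^{1/2m}$ with exponent $b=2m>p+q$, and B4 is the Gaussian-tail bound $\Esp_{\btheta}\big[(Z_n^{(\btheta,\bw)})^{1/2}\big]\leq e^{-c|\bw|^2}$ valid for large $n$. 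Hence no further analytic estimate is needed.

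What remains, and this is the only genuinely new content, is to check the extra hypotheses specific to the Bayesian setting. First I would record that $\Theta=\bA\times\bB$ is open, convex and bounded and that $\pi$ is continuous and strictly positive on $\Theta$, so that $\inf_{\mcK}\pi>0$ on each compact $\mcK\subset\Theta$ and, $\Theta$ being bounded, the Bayesian estimator $\wtilde{\btheta}_n$ is well defined with finite risk; this positivity is what forces the posterior to concentrate at the nominal rate. Second I would note that the loss $l(\btheta)=|\btheta|^a$, $a>0$, lies in the admissible class $\mcL$ (non-negative, even, continuous and vanishing at $0$, convex sublevel sets, polynomial majorant), so in particular it satisfies the growth restriction required. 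Third I would spell out the relation between the normalization $\bdelta_n=\diag\big[\sqrt{T_n}\,\mbI_{p\times p},\sqrt{n}\,\mbI_{q\times q}\big]$ appearing in the Bayesian risk and the local normalizing matrix: one has $\Phi_n^{(\btheta)}\bdelta_n=\diag\big[(\bJ_p^{(\balpha,\bbeta)})^{-1/2},(\bJ_q^{(\bbeta)})^{-1/2}\big]=(\bJ^{(\btheta)})^{-1/2}$, which by the Remark following Assumption~A3 is continuous and invertible on $\Theta$; this continuous factor is precisely what allows the conclusions phrased in terms of $\Phi_n^{(\btheta)}$ to be translated into conclusions phrased in terms of $\bdelta_n$.

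With these verifications in hand I would invoke Theorem~2.1 of Chapter~III in (Ibragimov and Has'minskii 1981). It gives, uniformly over any compact $\mcK\subset\Theta$: the consistency of $\wtilde{\btheta}_n$, the weak convergence $\mcL_{\btheta}\big[(\wtilde{\btheta}_n-\btheta)\bdelta_n\big]\to\mcN_d\big(0_d,(\bJ^{(\btheta)})^{-1}\big)$, and the convergence of the associated risks to those of the limiting Gaussian vector. Rewriting $(\wtilde{\btheta}_n-\btheta)\bdelta_n=\big(\sqrt{T_n}(\wtilde{\balpha}_n-\balpha),\sqrt{n}(\wtilde{\bbeta}_n-\bbeta)\big)$ and using $(\bJ^{(\btheta)})^{-1}=\diag\big[(\bJ_p^{(\balpha,\bbeta)})^{-1},(\bJ_q^{(\bbeta)})^{-1}\big]$ yields the stated asymptotic normality. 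Combining the convergence of risks with the local asymptotic minimax lower bound~(\ref{eq:lower-bound}), which holds because the model is LAN, then shows $\mcR_{\btheta}(\{\wtilde{\btheta}_n\})=\Esp\big[L(\bxi^{(\btheta)})\big]$ for every $L(\cdot)\in\mcL$, i.e. inequality~(\ref{eq:lower-bound}) becomes an equality for $\bar{\btheta}_n=\wtilde{\btheta}_n$.

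Since the whole of the analytic machinery, namely B1--B4, has already been set up for the maximum likelihood estimator, I do not anticipate any serious obstacle: the work here is essentially bookkeeping. If anything, the one point that needs care is matching the hypotheses of the Bayesian theorem in (Ibragimov and Has'minskii 1981) exactly — in particular checking that the boundedness of $\Theta$ together with the positivity of $\pi$ and condition B4 deliver the uniform integrability of the posterior tails that the theorem requires — and handling cleanly the passage between the two equivalent normalizations $\Phi_n^{(\btheta)}$ and $\bdelta_n$ via the continuous invertible map $\btheta\mapsto(\bJ^{(\btheta)})^{-1/2}$.
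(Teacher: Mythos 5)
Your proposal is correct and follows essentially the same route as the paper: the authors likewise dispose of this theorem by citing Theorem~2.1 of Chapter~III in (Ibragimov and Has'minskii 1981) together with the verification of conditions B1--B4 already carried out in the proof of Theorem~\ref{theor:mle-effic-discret}. Your additional bookkeeping (the membership of $l(\btheta)=|\btheta|^a$ in $\mcL$, the positivity of $\pi$ on compacts, and the identity $\Phi_n^{(\btheta)}\bdelta_n=(\bJ^{(\btheta)})^{-1/2}$ linking the two normalizations) is exactly the detail the paper leaves implicit.
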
	
\begin{proof} This is a direct consequence of Theorem 2.1 in Chapter III of (Ibragimov and Has'minskii 1981) and the proof of Theorem~\ref{theor:mle-effic-discret}.
\end{proof}
	
	\section{Linear parameter models} \label{sect:linear} 
	\subsection{Non-parametrized variance}\label{subsect:linear-no-var}
	Here we consider the specific case where $f(\balpha,t)=\balpha\vf(t)^{\transp}=\alpha_1 f_1(t)+\cdots+\alpha_pf_p(t)$, 
		$\btheta=\balpha$ and $\Theta=\bA\subset\mbR^p$:
		$$dX_t=\balpha\vf(t)^{\transp}\,dt+\sigma(t)\,d\BW_t.$$ The functions $f_1(\cdot),\dots,f_p(\cdot)$ are such that
there exists a positive definite $p\times p$-matrix $\bJ$ which fulfils 
$$\bJ=\lim_{n\to\infty}\frac{1}{T_n}\sum_{i=1}^n\frac{\bF_i^{\transp} \bF_i}{G_i^2}.$$
Here $\vf(t)\defin \big(f_1(t),\dots,f_p(t)\big)$,  $\bF_i\defin\int_{t_{i-1}}^{t_i}\vf(t)\,dt$ and $G_i^2\defin\int_{t_{i-1}}^{t_i}\sigma^2(t)\,dt$.

\bigskip
Then $F_i(\balpha)=\balpha\bF_i$, 
$\grad_{\balpha}F_i(\balpha)=\bF_i$,
 $\bJ^{(\balpha)}=\bJ_p^{(\balpha)}=\bJ$ and
	$$\mu_p(\balpha,\balpha')=\liminf_{n\to\infty}\,(\balpha-\balpha')\left(\frac{1}{T_n}\sum_{i=1}^n\frac{F_i^{\transp}F_i}{t_i-t_{i-1}}\right)(\balpha-\balpha')^{\transp}.	$$
Furthermore	
 the maximum likelihood estimator $\what{\balpha}_n$ has an explicit expression obtained as a zero of the gradient of $\Lambda_n(\balpha)$ so
$$\what{\balpha}_n=
	\left(\sum_{i=1}^n \frac{\bF_i^{\transp}\bF_i}{G_i^2}\right)^{-1}\left(\sum_{i=1}^n \frac{Y_i\bF_i}{G_i^2}\right).
$$
Since
$$Y_i=Y_i^{(\balpha)}\defin\balpha\bF_i^{\transp}+\int_{t_{i-1}}^{t_i}\sigma(t)\,dW_t.$$
we obtain that
\begin{equation*}
	\what{\balpha}_n
		=\balpha+\left(\sum_{i=1}^n \frac{\bF_i^{\transp}\bF_i}{G_i^2}\right)^{-1}\left(\sum_{i=1}^n \frac{\bF_i}{G_i^2}\int_{t_{i-1}}^{t_i}\sigma(t)\,dW_t\right).
		\end{equation*}

Then the maximum likelihood estimator $\what{\balpha}_n$ is a  Gaussian variable which converges in norm $L^r$ to $\balpha$ as $n\to\infty$ for any $r\geq 1$. The variance of $\what{\balpha}_n$ is equal to
$$\Var_{\balpha}[\what{\balpha}_n]=\left(\sum_{i=1}^n \frac{\bF_i^{\transp}\bF_i}{G_i^2}\right)^{-1}.
$$
The asymptotic variance matrix of $\sqrt{T_n}(\what{\balpha}_n-\balpha)$ is equal to $\bJ^{-1}$

\subsection{Linear parametrized noise}		
Here we consider the  case where $f(\balpha,t)=\balpha\vf(t)^{\transp}=\alpha_1 f_1(t)+\cdots+\alpha_pf_p(t)$, 
and $\sigma^2(\beta,t)=\beta\sigma^2(t)$.
		$\btheta=(\balpha,\beta)$ and $\Theta=\bA\times\bB\subset\mbR^p\times\mbR^+$:
		$$dX_t=\balpha\vf(t)^{\transp}\,dt+\sqrt{\beta}\sigma(t)\,d\BW_t.$$ 
In this case $p\geq 1$, $q=1$.
Moreover
	$$\bJ_p^{(\balpha,\bbeta)}=\lim_{n\to\infty}\frac{1}{T_n\beta^2}\sum_{i=1}^n\frac{\bF_i^{\transp} \bF_i}{G_i^2},
	\qquad
	\bJ_q^{(\beta)}=\frac{1}{\beta^2}$$
	$$\mu_p(\balpha,\balpha')=\liminf_{n\to\infty}(\balpha-\balpha')\left(\frac{1}{T_n}\sum_{i=1}^n\frac{F_i^{\transp}F_i}{t_i-t_{i-1}}\right)(\balpha-\balpha')^{\transp}	$$
	and
	$$\mu_q(\beta,\beta')=\left(\liminf_{n\to\infty}\frac{1}{n}\sum_{i=1}^n\frac{G_i^4}{(t_i-t_{i-1})^2}\right)(\beta-\beta')^2.$$

Furthermore with the same notation than in subsection~\ref{subsect:linear-no-var} we have
\begin{eqnarray*}
&&\what{\balpha}_n=
	\left(\sum_{i=1}^n \frac{\bF_i^{\transp}\bF_i}{G_i^2}\right)^{-1}\left(\sum_{i=1}^n \frac{Y_i\bF_i}{G_i^2}\right)\\
&&\quad\,\,\,	=\,
\balpha
		+\left(\sum_{i=1}^n \frac{\bF_i\bF_i^{\transp}}{G_i^2}\right)^{-1}
		\left(\sum_{i=1}^n \frac{\bF_i}{G_i^2}\int_{t_{i-1}}^{t_i}\sigma(t)\,dW_t\right)
		\end{eqnarray*}
		and
		$$\what{\beta}_n=\frac{1}{n}\sum_{i=1}^n\frac{(Y_i-\what{\balpha}_n\bF_i^{\transp})^2}{G_i^2}.$$
	We readily obtain that the maximum likelihood estimator $(\what{\balpha}_n,\what{\bbeta}_n)$ converges in norm $L^r$ to $(\balpha,\bbeta)$ as $n\to\infty$ for any $r\geq 1$.
		

	\section{Appendix A}
Next, we establish some technical results.

\begin{lemm}\label{lemm:conv-L-Delta}
The random vector $\Delta_n^{(\btheta_n)}$ converges in distribution to the $d$-dimension standard Gaussian distribution:
$$\lim_{n\to\infty} \mcL_{\btheta_n}\big[\Delta_n^{(\btheta_n)}\big]=\mcN_{d}(0_{d},\mbI_{d\times d}).$$
\end{lemm}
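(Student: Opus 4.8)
The plan is to view $\Delta_n^{(\btheta_n)}=\sum_{i=1}^n\Delta_{n,i}^{(\btheta_n)}$ as the row-sum of a triangular array of independent, centered, square-integrable random $d$-vectors and to invoke a Lindeberg--Feller (Lyapunov) central limit theorem. Because the summands are vector valued and their parameter $\btheta_n$ moves with $n$, I would first pass to the scalar setting by the Cram\'er--Wold device: it suffices to show, for each fixed $\bv\in\mbR^d$, that $\Delta_n^{(\btheta_n)}\bv\transp=\sum_{i=1}^n\Delta_{n,i}^{(\btheta_n)}\bv\transp$ converges in law to $\mcN\big(0,|\bv|^2\big)$.

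The variance is handled as follows. Starting from the expression of $\Var_{\btheta}\big[\Delta_{n,i}^{(\btheta)}\big]$ computed in the proof of Proposition~\ref{prop:lan}, summing over $i$, and using $\bvarphi_n^{(\balpha_n,\bbeta_n)}=\big(T_n\bJ_p^{(\balpha_n,\bbeta_n)}\big)^{-1/2}$ and $\bpsi_n^{(\bbeta_n)}=\big(n\bJ_q^{(\bbeta_n)}\big)^{-1/2}$, the matrix $\sum_{i=1}^n\Var_{\btheta_n}\big[\Delta_{n,i}^{(\btheta_n)}\big]$ is block-diagonal: the first block is the conjugation by $\big(\bJ_p^{(\balpha_n,\bbeta_n)}\big)^{-1/2}$ of the normalized Fisher-information sum of Assumption~A3 evaluated at $\btheta_n$, the second block the analogous conjugation by $\big(\bJ_q^{(\bbeta_n)}\big)^{-1/2}$, and the off-diagonal blocks vanish because $\Esp\big[W_i^{(\bbeta_n)}\big((W_i^{(\bbeta_n)})^2-1\big)\big]=0$. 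The key point is the uniformity in Assumption~A3: the convergence there being uniform over $\btheta\in\Theta$, the two normalized sums equal $\bJ_p^{(\balpha_n,\bbeta_n)}+o(1)$ and $\bJ_q^{(\bbeta_n)}+o(1)$; and since $\btheta\mapsto\big(\bJ_p^{(\balpha,\bbeta)}\big)^{-1/2}$ and $\btheta\mapsto\big(\bJ_q^{(\bbeta)}\big)^{-1/2}$ are continuous (Remark~3), hence bounded on the compact $\mcK$ containing all the $\btheta_n$, the conjugations converge to $\mbI_{p\times p}$ and $\mbI_{q\times q}$. Thus $\sum_{i=1}^n\Var_{\btheta_n}\big[\Delta_{n,i}^{(\btheta_n)}\big]\to\mbI_{d\times d}$, whence $\sum_{i=1}^n\Esp_{\btheta_n}\big[\big(\Delta_{n,i}^{(\btheta_n)}\bv\transp\big)^2\big]\to|\bv|^2$.

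For the Lindeberg condition I would check the stronger Lyapunov condition of order $4$. Each $W_i^{(\bbeta_n)}$ is standard Gaussian, so $\Esp\big[|W_i^{(\bbeta_n)}|^4\big]$ and $\Esp\big[\big|(W_i^{(\bbeta_n)})^2-1\big|^4\big]$ are bounded by an absolute constant. By Assumption~A2 and $\inf_{\bbeta,t}\sigma^2(\bbeta,t)>0$, one has $|\grad_{\balpha}F_i(\balpha_n)|\le\sup_{\balpha,t}|\grad_{\balpha}f(\balpha,t)|\,(t_i-t_{i-1})$ and $G_i^2(\bbeta_n)\ge\big(\inf_{\bbeta,t}\sigma^2\big)(t_i-t_{i-1})$, so that $|\grad_{\balpha}F_i(\balpha_n)|/G_i(\bbeta_n)\le C\,(t_i-t_{i-1})^{1/2}$, while $|\grad_{\bbeta}\ln G_i^2(\bbeta_n)|=|\grad_{\bbeta}G_i^2(\bbeta_n)|/G_i^2(\bbeta_n)\le\sup_{\bbeta,t}|\grad_{\bbeta}\sigma^2|/\inf_{\bbeta,t}\sigma^2$ uniformly in $i,n$. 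Combining with $|\bvarphi_n^{(\balpha_n,\bbeta_n)}|\le C\,T_n^{-1/2}$ and $|\bpsi_n^{(\bbeta_n)}|\le C\,n^{-1/2}$ (boundedness of $\bJ_p^{-1/2}$, $\bJ_q^{-1/2}$ on $\mcK$) and using $\sum_i(t_i-t_{i-1})^2\le h_n\sum_i(t_i-t_{i-1})=h_nT_n$, I expect the bound
$$\sum_{i=1}^n\Esp_{\btheta_n}\Big[\big|\Delta_{n,i}^{(\btheta_n)}\big|^4\Big]\le C\Big(T_n^{-2}\sum_{i=1}^n(t_i-t_{i-1})^2+n^{-1}\Big)\le C\big(h_n\,T_n^{-1}+n^{-1}\big)\longrightarrow 0,$$
since $\{h_n\}$ is bounded while $T_n,n\to\infty$. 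This moment estimate is the heart of the proof, and it is precisely here that the non-ergodic, deterministic-signal structure pays off: because the signal block is normalized by $T_n$ and the variance block by $n$, no assumption $h_n\to0$ is needed. Granting it, $\sum_i\Esp_{\btheta_n}\big[|\Delta_{n,i}^{(\btheta_n)}\bv\transp|^4\big]\to0$, the scalar Lyapunov CLT applies to $\{\Delta_{n,i}^{(\btheta_n)}\bv\transp\}$, and Cram\'er--Wold concludes. The only genuine difficulty — the main obstacle — is the bookkeeping forced by the moving parameter $\btheta_n$, which is absorbed entirely by the uniformity built into Assumption~A3 together with the continuity of the Fisher-information square roots on compacts.
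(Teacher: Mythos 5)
Your proof is correct, but it follows a genuinely different route from the paper. The paper exploits the exact Gaussian structure of the summands (each $\Delta_{n,i}^{(\btheta_n)}\bw^{\transp}$ is of the form $aW^2+bW+c$ with $W\sim\mcN(0,1)$) to compute the Laplace transform $\Esp_{\btheta_n}\big[e^{\Delta_n^{(\btheta_n)}\bw^{\transp}}\big]$ in closed form, then shows via elementary estimates on $\ln(1-2a_{n,i})$ and $b_{n,i}^2/(1-2a_{n,i})$ that this transform converges to $e^{|\bw|^2/2}$ for $|\bw|<1$, which identifies the limit law. You instead reduce to one dimension by Cram\'er--Wold and verify a Lyapunov condition of order $4$ for the triangular array of independent centered summands; your variance computation (conjugation of the Assumption~A3 sums by $\big(\bJ_p^{(\balpha_n,\bbeta_n)}\big)^{-1/2}$ and $\big(\bJ_q^{(\bbeta_n)}\big)^{-1/2}$, vanishing off-diagonal blocks from $\Esp[W^3]=\Esp[W]=0$) and your fourth-moment bound $\sum_i\Esp\big[|\Delta_{n,i}^{(\btheta_n)}|^4\big]\le C(h_nT_n^{-1}+n^{-1})\to0$ are both sound, using exactly the ingredients available (A2, A3, boundedness of the inverse square roots on $\mcK$, $\sum_i(t_i-t_{i-1})^2\le h_nT_n$). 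What each approach buys: the paper's Laplace-transform computation is self-contained and reuses machinery (the formula for $\Esp[e^{aW^2+bW+c}]$, the logarithmic estimates) that also serves in Lemmas~2 and~3, at the cost of being wholly tied to Gaussianity and requiring care about the domain $|\bw|<1$ of the transform; your Lindeberg--Feller argument is the more standard and more robust proof, needs only fourth moments, and would survive a non-Gaussian noise with comparable moment bounds, but imports a classical CLT rather than proving everything by hand. Both correctly isolate the point that no condition $h_n\to0$ is needed because the two blocks are normalized by $T_n$ and $n$ respectively.
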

\begin{proof}
We know that if $W$ is a standard Gaussian random variable, that is $\mcL(W)=\mcN(0,1)$, then  for every $a,b,c\in\mbR$ with $a<1/2$ we have
$$\Esp\left[e^{aW^2+bW+c}\right]=\exp\left[\frac{b^2}{2(1-2a)}+c-\frac{1}{2}\ln(1-2a)\right].$$

First let $n_1$ 
$$n_1\geq \frac{4\sup_{\bbeta}\big|\big(\bJ_q^{(\bbeta)}\big)^{-1/2}\big|^2\sup_{\bbeta,t}\big|\grad_{\bbeta}\sigma^2(\bbeta,t)\big|}{\inf_{\bbeta,t}\sigma^4(\bbeta,t)}$$
thus, as $\bpsi_n^{(\bbeta)}=\big(n \bJ_q^{(\bbeta)}\big)^{-1/2}$, for every $n>n_1$ we have 
$$\big|\big(\grad_{\bbeta}\ln G_i^2(\bbeta)\big)\bpsi_n^{(\bbeta)}\big|^2\leq \frac{\big|\big(\bJ_q^{(\bbeta)}\big)^{-1/2}\big|^2\sup_{\bbeta,t}\big|\grad_{\bbeta}\sigma^2(\bbeta,t)\big|}{n\inf_{\bbeta,t}\sigma^4(\bbeta,t)}\leq \frac{1}{2}.$$
Hence for $n>n_1$ and $\bw=\big(\bu,\bv\big)\in \mbR^d$ with $|\bw|<1$, we have 
$$\Esp_{\btheta_n}\left[e^{\Delta_{n,i}^{(\btheta_n)}\bw^{\transp}}\right]=\exp\left[\frac{b_{n,i}^2}{2(1-2a_{n,i})}+c_{n,i}-\frac{1}{2}\ln(1-2a_{n,i})\right]$$
where
\begin{eqnarray*}
&&a_{n,i}=-c_{n,i}=\frac{\big(\grad_{\bbeta}\ln G_i^2(\bbeta_n)\big),\bpsi_n^{(\bbeta_n)}\bv^{\transp}}{2}\\
&&b_{n,i}=\frac{\big(\grad_{\balpha}F_i(\balpha_n)\big)\phi_n^{(\btheta_n)}\bu^{\transp}}{G_i(\bbeta_n)}
\end{eqnarray*}
The independence of the Gaussian variables $W_i$, $i=1,\dots,n$, implies that
$$\ln\Esp_{\btheta_n}\left[e^{\Delta_{n}^{(\btheta_n)}\bw^{\transp}}\right]=\sum_{i=1}^n\frac{b_{n,i}^2}{2(1-2a_{n,i})}+\sum_{i=1}^{n}\left(c_{n,i}-\frac{\ln(1-2a_{n,i})}{2}\right).$$

Since $n> n_1$ and $|\bv_1|<1$ we have $|a_{n,i}|\leq 1/4$ and
$$\left|\frac{b_{n,i}^2}{1-2a_{n,i}}-b_{n,i}^2\right|\leq\int_0^1\frac{2b_{n,i}^2|a_{n,i}|}{(1-2a_{n,i}x)^2}\,dx\leq \frac{2b_{n,i}^2|a_{n,i}|}{(1-2|a_{n,i}|)^2}\leq 8b_{n,i}^2|a_{n,i}|$$
as well as
$$\left|c_{n,i}-\frac{\ln(1-2a_{n,i})}{2}-a_{n,i}^2\right|=\left|\frac{\ln(1-2a_{n,i})}{2}+a_{n,i}+a_{n,i}^2\right|\leq\int_0^1\frac{4|a_{n,i}|^3x^2}{|1-2a_{n,i}x|}\,dx\leq 8|a_{n,i}|^3. $$
Thus
$$\left|\ln\Esp\left[e^{\bw^{\transp}\Delta_{n}^{(\btheta)}}\right]-\frac{1}{2}\sum_{i=1}^n(b_{n,i}^2+2a_{n,i}^2)\right|
\leq
4\sum_{i=1}^n(b_{n,i}^2+2a_{n,i}^2)|a_{n,i}|.$$
We know that $\bpsi_n^{(\bbeta_n)}=\big(n \bJ_q^{(\bbeta_n)}\big)^{-1/2}$, and $\bvarphi_n^{(\balpha_n,\bbeta_n)}=\big(T_n \bJ_p^{(\btheta_n)}\big)^{-1/2}$. We deduce that
$$|a_{n,i}|^2\leq  |\bv\bpsi_n^{(\bbeta_n)}|^2\frac{\sup_{\bbeta,t}\big|\grad_{\bbeta}\sigma^2(\bbeta,t)\big|^2}{\inf_{\bbeta,t}|\sigma^2(\bbeta,t)|^2}
\leq \frac{|\bv|^2|\bJ_q(\bbeta_n)^{-1/2}|^2}{n}\times\frac{\sup_{\bbeta,t}\big|\grad_{\bbeta}\sigma^2(\bbeta,t)\big|^2}{\inf_{\bbeta,t}|\sigma^2(\bbeta,t)|^2}.$$
and assumption~A3 entails that
$$\lim_{n\to\infty}\sum_{i=1}^n(b_{n,i}^2+2a_{n,i}^2)=|\bu|^2+|\bv|^2.$$
We deduce that
$$\lim_{n\to\infty}\Esp_{\btheta_n}\left[e^{\Delta_n^{(\btheta_n)}\bw^{\transp}}\right]=e^{\frac{|\bw|^2}{2}}$$
for any $\bw$ such $|\bw|<1$. This is sufficient to conclude that the random vector $\Delta_n^{(\btheta_n)}$ converges in distribution to the $d$-dimension standard Gaussian distribution.
\end{proof}
\begin{lemm}\label{lemm:EZz} Let $0<z<1$ be fixed. For $\btheta=(\balpha,\bbeta)$ and $\bmu=(\bdelta,\bgamma)$ such $\btheta, \btheta+\bmu\in\Theta$ convex, we have
\begin{eqnarray}\label{eq:EZ1/2}
&&\ln\Esp_{\btheta}\left[e^{z\left(\Lambda_n(\btheta+\bmu)-\Lambda_n(\btheta)\right)}\right]
 =
-\sum_{i=1}^n\frac{\big(F_i(\balpha+\bdelta)-F_i(\balpha)\big)^2}{\ds2\big((1-z)^{-1}G_i^2(\bbeta)+z^{-1}G_i^2(\bbeta+\bgamma)\big)}\nonumber\\ 
&& \qquad\qquad\qquad -\,
\sum_{i=1}^n\int_{G_i^2(\bbeta)}^{G_i^2(\bbeta+\bgamma)}\frac{G_i^2(\bbeta+\bgamma)-x}{ 2x\big((1-z)^{-1}x+z^{-1}G_i^2(\bbeta+\bgamma)\big)}\,dx.
\end{eqnarray}
\end{lemm}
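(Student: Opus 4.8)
The plan is to compute the expectation termwise, using the independence of the increments $Y_i$, $i=1,\dots,n$, under $\Prob_\btheta$. Fix $i$; under $\Prob_\btheta$ one has $Y_i = F_i(\balpha) + G_i(\bbeta)W_i$ with $W_i\sim\mcN(0,1)$, the $W_i$ being independent. From the expression \eqref{eq:log-likelihood} of $\Lambda_n$,
\begin{equation*}
\Lambda_n(\btheta+\bmu)-\Lambda_n(\btheta)
=
-\sum_{i=1}^n\ln\frac{G_i(\bbeta+\bgamma)}{G_i(\bbeta)}
-\sum_{i=1}^n\left(\frac{\big(Y_i-F_i(\balpha+\bdelta)\big)^2}{2G_i^2(\bbeta+\bgamma)}-\frac{\big(Y_i-F_i(\balpha)\big)^2}{2G_i^2(\bbeta)}\right),
\end{equation*}
so $z\big(\Lambda_n(\btheta+\bmu)-\Lambda_n(\btheta)\big)$ is a sum over $i$ of terms that are quadratic in $W_i$. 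First I would substitute $Y_i=F_i(\balpha)+G_i(\bbeta)W_i$ and write the exponent, for each $i$, in the form $a_iW_i^2+b_iW_i+c_i$ with
\begin{equation*}
a_i = \frac{z}{2}\left(\frac{1}{G_i^2(\bbeta)}-\frac{G_i^2(\bbeta)}{G_i^2(\bbeta)\,G_i^2(\bbeta+\bgamma)}\right)\cdot G_i^2(\bbeta)
= \frac{z}{2}\left(1-\frac{G_i^2(\bbeta)}{G_i^2(\bbeta+\bgamma)}\right),
\end{equation*}
$b_i$ proportional to $F_i(\balpha)-F_i(\balpha+\bdelta)$, and $c_i$ collecting the remaining deterministic pieces. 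One must check $a_i<1/2$: since $0<z<1$ and $G_i^2(\bbeta)>0$, indeed $a_i = \frac z2\big(1-G_i^2(\bbeta)/G_i^2(\bbeta+\bgamma)\big) < \frac z2 < \frac12$, so the Gaussian moment generating identity $\Esp[e^{aW^2+bW+c}]=\exp\big[\tfrac{b^2}{2(1-2a)}+c-\tfrac12\ln(1-2a)\big]$ (already used in the proof of Lemma \ref{lemm:conv-L-Delta}) applies. By independence, $\ln\Esp_\btheta\big[e^{z(\Lambda_n(\btheta+\bmu)-\Lambda_n(\btheta))}\big]=\sum_{i=1}^n\big(\tfrac{b_i^2}{2(1-2a_i)}+c_i-\tfrac12\ln(1-2a_i)\big)$.

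It then remains to simplify each summand into the two claimed pieces. For the $b_i$-part: $1-2a_i = (1-z) + z\,G_i^2(\bbeta)/G_i^2(\bbeta+\bgamma)$, and $b_i^2$ carries a factor $\big(F_i(\balpha+\bdelta)-F_i(\balpha)\big)^2$ together with $G_i^2(\bbeta)/G_i^4(\bbeta+\bgamma)\cdot z^2$ (from squaring the $W_i$-coefficient); multiplying numerator and denominator through by $G_i^2(\bbeta+\bgamma)$ shows $\tfrac{b_i^2}{2(1-2a_i)}$ equals $-\big(F_i(\balpha+\bdelta)-F_i(\balpha)\big)^2$ divided by $2\big((1-z)^{-1}G_i^2(\bbeta)+z^{-1}G_i^2(\bbeta+\bgamma)\big)$ after dividing numerator and denominator by $z(1-z)$; this is a routine algebraic identity which I will carry out once. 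For the remaining terms $c_i-\tfrac12\ln(1-2a_i)$: these depend only on $\bbeta$, $\bbeta+\bgamma$ (not on $\balpha$), so it suffices to verify the scalar identity
\begin{equation*}
c_i-\tfrac12\ln(1-2a_i) = -\int_{G_i^2(\bbeta)}^{G_i^2(\bbeta+\bgamma)}\frac{G_i^2(\bbeta+\bgamma)-x}{2x\big((1-z)^{-1}x+z^{-1}G_i^2(\bbeta+\bgamma)\big)}\,dx,
\end{equation*}
which one proves by differentiating both sides with respect to, say, $G_i^2(\bbeta)$ (treating $G_i^2(\bbeta+\bgamma)$ as a fixed parameter) and checking they agree, together with the boundary case $G_i^2(\bbeta)=G_i^2(\bbeta+\bgamma)$ where both sides vanish. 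Summing over $i$ gives \eqref{eq:EZ1/2}.

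The main obstacle is purely the bookkeeping in collecting $c_i$: one has $c_i = -z\ln\big(G_i(\bbeta+\bgamma)/G_i(\bbeta)\big) + \tfrac z2\big(1-G_i^2(\bbeta)/G_i^2(\bbeta+\bgamma)\big)$ after the substitution $Y_i=F_i(\balpha)+G_i(\bbeta)W_i$ reduces the two squared terms modulo the $W_i^2$ and $W_i$ contributions, and one must see that $c_i - \tfrac12\ln(1-2a_i)$ reorganizes — via the substitution $x=G_i^2(\bbeta)/(1-2a_i t\cdot\text{(something)})$ or, more transparently, by the differentiation-in-parameter argument above — into the stated integral. No delicate estimate is needed; the content is an exact computation, so the care is entirely in tracking the constants $z$, $1-z$ and the ratios $G_i^2(\bbeta)/G_i^2(\bbeta+\bgamma)$ without error.
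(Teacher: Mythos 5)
Your strategy is exactly the paper's: substitute $Y_i=F_i(\balpha)+G_i(\bbeta)W_i$, write the exponent termwise as $a_iW_i^2+b_iW_i+c_i$, apply the Gaussian identity $\Esp[e^{aW^2+bW+c}]=\exp[b^2/(2(1-2a))+c-\tfrac12\ln(1-2a)]$ together with independence, and finish by exact algebra; the check $a_i<1/2$ and the rewriting $1-2a_i=(1-z)+zG_i^2(\bbeta)/G_i^2(\bbeta+\bgamma)$ are also as in the paper. However, your bookkeeping of the constant term $c_i$ is wrong in a way that makes two of your claimed intermediate identities false. After the substitution one finds
$$c_i=-z\ln\frac{G_i(\bbeta+\bgamma)}{G_i(\bbeta)}-\frac{z\big(F_i(\balpha+\bdelta)-F_i(\balpha)\big)^2}{2G_i^2(\bbeta+\bgamma)},$$
not $-z\ln\big(G_i(\bbeta+\bgamma)/G_i(\bbeta)\big)+\tfrac z2\big(1-G_i^2(\bbeta)/G_i^2(\bbeta+\bgamma)\big)$ as you wrote: the quantity $\tfrac z2\big(1-G_i^2(\bbeta)/G_i^2(\bbeta+\bgamma)\big)$ is your $a_i$ (the coefficient of $W_i^2$) and must not reappear in $c_i$, while $c_i$ genuinely does depend on $\balpha$ through the squared drift increment. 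Consequently your claim that $b_i^2/(2(1-2a_i))$ alone reduces to the first term of~(\ref{eq:EZ1/2}) is impossible on sign grounds: $b_i^2/(2(1-2a_i))\geq 0$ whereas that term is $\leq 0$. The identity that actually holds is
$$\frac{b_i^2}{2(1-2a_i)}-\frac{z\big(F_i(\balpha+\bdelta)-F_i(\balpha)\big)^2}{2G_i^2(\bbeta+\bgamma)}
=-\frac{\big(F_i(\balpha+\bdelta)-F_i(\balpha)\big)^2}{2\big((1-z)^{-1}G_i^2(\bbeta)+z^{-1}G_i^2(\bbeta+\bgamma)\big)},$$
i.e.\ the $\balpha$-dependent half of $c_i$ must be absorbed into the $b_i$-term.

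Likewise, the scalar identity you propose to verify by differentiating in $G_i^2(\bbeta)$ would fail with your $c_i$; what is true (and what the paper proves, by writing both logarithms as integrals in $x$ rather than by a differentiation-in-parameter argument — your alternative verification is fine once the inputs are right) is
$$-\frac z2\ln\frac{G_i^2(\bbeta+\bgamma)}{G_i^2(\bbeta)}-\frac12\ln\Big(1-z+z\,\frac{G_i^2(\bbeta)}{G_i^2(\bbeta+\bgamma)}\Big)
=-\int_{G_i^2(\bbeta)}^{G_i^2(\bbeta+\bgamma)}\frac{G_i^2(\bbeta+\bgamma)-x}{2x\big((1-z)^{-1}x+z^{-1}G_i^2(\bbeta+\bgamma)\big)}\,dx,$$
with no extra $\tfrac z2\big(1-G_i^2(\bbeta)/G_i^2(\bbeta+\bgamma)\big)$ on the left. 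With $c_i$ corrected, the rest of your plan goes through and coincides with the paper's proof; as written, the two identities you intend to ``carry out once'' are not the true ones.
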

\begin{proof} 
When $Y_i=F_i(\balpha)+\int_{t_{i-1}}^{t_i}\sigma(\bbeta,t)\,d\BW_t$ we can write
$$\Lambda_n(\btheta+\bmu)-\Lambda_n(\btheta)=\sum_{i=1}^n\big(a_{n,i}W_i^2+b_{n,i}W_i+c_{n,i}\big)$$
where
\begin{eqnarray*}
a_{n,i}
&=&
\frac{1}{2}\left(1-\frac{G_i^2(\bbeta)}{G_i^2(\bbeta+\bmu)}\right),\\
b_{n,i}
&=&
\frac{\big(F_i(\balpha+\bdelta)-F_i(\balpha)\big)G_i(\bbeta)}{G_i^2(\bbeta+\bgamma)},\\
c_{n,i}
&=&
\frac{-\big(F_i(\balpha+\bdelta)-F_i(\balpha)\big)^2}{2G_i^2(\bbeta+\bgamma)}+\ln\left(\frac{G_i(\bbeta)}{G_i(\bbeta+\bgamma)}\right)
\end{eqnarray*}
and
$$W_i=\frac{1}{G_i(\bbeta)}\int_{t_{i-1}}^{t_i}\sigma(\bbeta,t)\,d\BW_t.$$

Let $0<z<1$ be fixed. Since the random variables $W_i$, $i=1,\dots,n$ are independent with the  standard Gaussian distribution $\mcN(0,1)$, and since $za_{n,i}<1/2$, we obtain
$$\ln\Esp_{\btheta}\left[e^{z\left(\Lambda_n^{(\btheta+\bmu)}-\Lambda_n^{(\btheta)}\right)}\right]=\sum_{i=1}^n\left(\frac{z^2b_{n,i}^2}{2(1-2za_{n,i})}+zc_{n,i}-\frac{1}{2}\ln(1-2za_{n,i})\right).$$

Morevover
\begin{eqnarray*}
1-2a_{n,i}
=
1-z+\frac{zG_i^2(\bbeta)}{G_i^2(\bbeta+\bgamma)},
\end{eqnarray*}
\begin{eqnarray*}
\ln(1-2a_{n,i})
=
\int_{G_i^2(\bbeta+\bgamma)}^{G_i^2(\bbeta)}\frac{zdx}{zx+(1-z)G_i^2(\bbeta+\bgamma)}
\end{eqnarray*}
and
\begin{eqnarray*}
\ln\left(\frac{G_i^2(\bbeta)}{G_i^2(\bbeta+\bgamma)}\right)=-\int_{G_i^2(\bbeta)}^{G_i^2(\bbeta+\bgamma)}\frac{dx}{x}.
\end{eqnarray*}
Hence 
\begin{eqnarray*}
\frac{z^2b_{n,i}^2}{1-2za_{n,i}}-\frac{z\big(F_i(\balpha+\bdelta)-F_i(\balpha)\big)^2}{G_i^2(\bbeta+\bgamma)}=
\frac{-z(1-z)\big(F_i(\balpha+\bdelta)-F_i(\balpha)\big)^2}{zG_i^2(\bbeta)+(1-z)G_i^2(\bbeta+\bgamma)}
\end{eqnarray*}
and
\begin{eqnarray*}
-\ln(1-2za_{n,i})+z\ln\left(\frac{G_i(\bbeta)}{G_i(\bbeta+\bgamma)}\right)
=\int_{G_i^2(\bbeta)}^{G_i^2(\bbeta+\bgamma)}\frac{z(1-z)\big(x-G_i^2(\bbeta+\bgamma)\big)}{x\big(zx+(1-z)G_i^2(\bbeta+\bgamma)\big)}\,dx.
\end{eqnarray*}
Then the lemma is proved        
\end{proof}

\begin{lemm}\label{lemm:Laplace-Vn}
There exists $n_0>0$ such that for every $n\geq n_0$, $\btheta\in\Theta$ , $s\in[0,1]$ and $\bw_1,\bw_2\in\mcW_{\btheta,n}$ with $|\bw_1-\bw_2|\leq 1$,
the Laplace function $z\mapsto  L_n^{(V)}(z)$ is well defined, differentiable with respect to $z$ in the interval $(-1,1)$, and we have
\begin{eqnarray}\label{eq:Laplace-Vn}
|\partial_z^rL^{(V)}_n(0)|\leq  c_r|\bv_2-\bv_1|^r\times \big|\big(\bJ_q^{(\bbeta)}\big)^{-1/2}\big|^r\times\frac{\sup_{\bbeta',t}\big|\grad_{\bbeta}\sigma^2(\bbeta',t)\big|^r}{\inf_{\bbeta',t}\sigma^{2r}(\bbeta',t)}
   \end{eqnarray}
for every integer $r\geq 2$, 
where $c_r$ is some constant value depending only on $r$.
\end{lemm}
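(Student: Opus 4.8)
The plan is to use that, when the parameter value equals $\btheta''\defin\btheta+(\bw_1+s(\bw_2-\bw_1))\Phi_n^{(\btheta)}$ (with components $(\balpha'',\bbeta'')$), the variables $W_i^{(\bbeta,\bv_1,\bv_2,s)}$, $i=1,\dots,n$, are independent with law $\mcN(0,1)$, so that $(\bv_2-\bv_1)V_n^{(\bbeta,\bv_1,\bv_2,s)}$ is a weighted sum of independent centred $\chi^2_1$ variables whose Laplace transform factorises explicitly. Writing
$$\lambda_{n,i}\defin(\bv_2-\bv_1)\,\bpsi_n^{(\bbeta)}\,\frac{\grad_{\bbeta}^{\etransp}G_i^2(\bbeta'')}{2\,G_i^2(\bbeta'')}\in\mbR ,$$
one has $(\bv_2-\bv_1)V_n^{(\bbeta,\bv_1,\bv_2,s)}=\sum_{i=1}^n\lambda_{n,i}\big((W_i^{(\bbeta,\bv_1,\bv_2,s)})^2-1\big)$; the elementary identity $\Esp\big[e^{z\lambda(W^2-1)}\big]=e^{-z\lambda}(1-2z\lambda)^{-1/2}$, valid whenever $2z\lambda<1$ and $\mcL(W)=\mcN(0,1)$, together with independence, then gives
$$L_n^{(V)}(z)=\prod_{i=1}^n e^{-z\lambda_{n,i}}(1-2z\lambda_{n,i})^{-1/2},\qquad K_n(z)\defin\ln L_n^{(V)}(z)=\sum_{i=1}^n\Big(-z\lambda_{n,i}-\tfrac12\ln(1-2z\lambda_{n,i})\Big).$$

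The decisive a priori estimate is that $\max_i|\lambda_{n,i}|$ is of order $n^{-1/2}$, uniformly in everything else. Indeed $\grad_{\bbeta}G_i^2(\bbeta'')=\int_{t_{i-1}}^{t_i}\grad_{\bbeta}\sigma^2(\bbeta'',t)\,dt$ (differentiation under the integral, by Assumptions A1 and A2), hence $|\grad_{\bbeta}G_i^2(\bbeta'')|\leq(t_i-t_{i-1})\sup_{\bbeta',t}|\grad_{\bbeta}\sigma^2(\bbeta',t)|$ while $G_i^2(\bbeta'')\geq(t_i-t_{i-1})\inf_{\bbeta',t}\sigma^2(\bbeta',t)>0$ by A2, so the factor $t_i-t_{i-1}$ cancels and $|\grad_{\bbeta}G_i^2(\bbeta'')|/G_i^2(\bbeta'')\leq\sup_{\bbeta',t}|\grad_{\bbeta}\sigma^2(\bbeta',t)|/\inf_{\bbeta',t}\sigma^2(\bbeta',t)$. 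Combined with $|\bpsi_n^{(\bbeta)}|=n^{-1/2}\big|(\bJ_q^{(\bbeta)})^{-1/2}\big|$ this gives $\max_{1\leq i\leq n}|\lambda_{n,i}|\leq A/(2\sqrt n)$ with $A\defin|\bv_2-\bv_1|\,\big|(\bJ_q^{(\bbeta)})^{-1/2}\big|\,\sup_{\bbeta',t}|\grad_{\bbeta}\sigma^2(\bbeta',t)|/\inf_{\bbeta',t}\sigma^2(\bbeta',t)$, so that the right-hand side of (\ref{eq:Laplace-Vn}) is exactly $c_rA^r$. Since $|\bv_2-\bv_1|\leq1$ and $\bbeta\mapsto\big|(\bJ_q^{(\bbeta)})^{-1/2}\big|$ is bounded on $\bB$ (as already used in Lemma~\ref{lemm:conv-L-Delta}), I would fix $n_0$ so large that $\max_i|\lambda_{n,i}|\leq 1/4$ for all $n\geq n_0$, all $\btheta$, all $s\in[0,1]$ and all admissible $\bw_1,\bw_2$; then for $|z|<1$ every factor $1-2z\lambda_{n,i}$ is positive, $L_n^{(V)}$ is a finite product of $C^\infty$ functions of $z$ on $(-1,1)$, hence well defined and differentiable there, and $L_n^{(V)}(0)=1$, $K_n(0)=0$.

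The bound on $\partial_z^rL_n^{(V)}(0)$ then follows from the moment--cumulant relation. Since $K_n$ is the cumulant generating function of $(\bv_2-\bv_1)V_n^{(\bbeta,\bv_1,\bv_2,s)}$ and the cumulants of $W^2-1$ are $0$ in order $1$ and $2^{k-1}(k-1)!$ in order $k\geq2$, one gets $K_n'(0)=0$ and $K_n^{(r)}(0)=2^{r-1}(r-1)!\sum_{i=1}^n\lambda_{n,i}^r$ for $r\geq2$. The a priori estimate gives $\sum_{i=1}^n|\lambda_{n,i}|^r\leq n\,\big(A/(2\sqrt n)\big)^r=n^{1-r/2}2^{-r}A^r\leq2^{-r}A^r$ for $r\geq2$ --- this is exactly where the dependence on $n$ disappears, and for $r\geq3$ the sum even tends to $0$ --- so $|K_n^{(r)}(0)|\leq(r-1)!\,A^r$. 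Finally, from $L_n^{(V)}=\exp K_n$ with $K_n(0)=0$, the Fa\`a di Bruno formula (equivalently the moment--cumulant identity for $\exp$) gives $\partial_z^rL_n^{(V)}(0)=\sum_{\pi}\prod_{B\in\pi}K_n^{(|B|)}(0)$, the sum running over set partitions $\pi$ of $\{1,\dots,r\}$ all of whose blocks have size $\geq2$ (the others drop out because $K_n'(0)=0$); bounding each factor by $(|B|-1)!\,A^{|B|}$ and using $\sum_{\pi}\prod_{B\in\pi}(|B|-1)!=r!$ gives $|\partial_z^rL_n^{(V)}(0)|\leq r!\,A^r$, that is (\ref{eq:Laplace-Vn}) with $c_r=r!$.

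The only genuinely non-routine point, and hence the main obstacle, is the uniform estimate $\max_i|\lambda_{n,i}|=O(n^{-1/2})$: it is what simultaneously makes the Laplace transform finite and $C^\infty$ on $(-1,1)$ for $n$ large and keeps $\sum_i|\lambda_{n,i}|^r$ bounded independently of $n$, which is precisely why the final constant can be taken to depend on $r$ only. That estimate rests on the cancellation $|\grad_{\bbeta}G_i^2(\bbeta'')|/G_i^2(\bbeta'')\leq\sup_{\bbeta',t}|\grad_{\bbeta}\sigma^2(\bbeta',t)|/\inf_{\bbeta',t}\sigma^2(\bbeta',t)$ --- the one place where the two-sided control of $\sigma^2$ in Assumption A2 enters --- together with the normalisation $\bpsi_n^{(\bbeta)}=(n\bJ_q^{(\bbeta)})^{-1/2}$ and the uniform boundedness of $\bbeta\mapsto\big|(\bJ_q^{(\bbeta)})^{-1/2}\big|$ on $\bB$. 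Everything downstream is the explicit Gaussian Laplace transform and routine moment--cumulant algebra.
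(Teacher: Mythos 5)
Your proof is correct and follows essentially the same route as the paper: the explicit Gaussian Laplace transform of each weighted centred $\chi^2_1$ term, the product over independent $W_i$'s, the key uniform bound $\max_i|\lambda_{n,i}|=O(n^{-1/2})$ obtained from the cancellation of $t_i-t_{i-1}$ between $|\grad_{\bbeta}G_i^2|$ and $G_i^2$ together with $\bpsi_n^{(\bbeta)}=(n\bJ_q^{(\bbeta)})^{-1/2}$, and then the cumulant bound $\sum_i|\lambda_{n,i}|^r\leq n^{1-r/2}2^{-r}A^r$. The only (cosmetic) difference is that you convert cumulant bounds into moment bounds via the set-partition form of Fa\`a di Bruno, whereas the paper uses the equivalent recursion $\partial_z^rL=\sum_k\binom{r-1}{k}\partial_z^kL\,\partial_z^{r-k}\ln L$ with induction; your version also correctly carries the $(r-1)!$ factor in $\partial_z^r\ln L_n^{(V)}(0)$ that the paper's displayed formula omits.
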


\begin{proof}
For this purpose, let
$$\ba_i\defin\bpsi_n^{(\bbeta)}\frac{\grad_{\bbeta}^{\etransp}G_i^2\big(\bbeta+(\bv_1+s(\bv_2-\bv_1))\bpsi_n^{(\bbeta)}\big)}{2G_i^2\big(\bbeta+(\bv_1+s(\bv_2-\bv_1))\bpsi_n^{(\bbeta)}\big)}$$
Then
$$V_n^{(\bbeta,\bv_1,\bv_2,s)}=(\bv_2-\bv_1)\sum_{i=1}^n \ba_i\big(\big(W_i^{(\bbeta,\bv_1,\bv_2,s)}\big)^2-1\big)=\sum_{i=1}^n V_{n,i}$$
where $V_{n,i}=(\bv_2-\bv_1) \ba_i\big(\big(W_i^{(\bbeta,\bv_1,\bv_2,s)}\big)^2-1\big)$.
As $\bpsi_n^{(\bbeta)}=\big(n\bJ_q^{(\bbeta)}\big)^{-1/2}$, assumptions A1 and A2 imply that
$$|\ba_i|\leq\frac{1}{\sqrt{n}}\times \frac{\sup_{\bbeta}\big|\big(\bJ_q^{(\bbeta)}\big)^{-1/2}\big|\sup_{\bbeta,t}\sigma^2(\bbeta,t)}{2\inf_{\bbeta,t}\sigma^2(\bbeta,t)},$$
thus there exist $n_1$ such for $n>n_1$, $|a_i|\leq 1/2$. 
Moreover as $|\bv_1-\bv_2|\leq|\bw_1-\bw_2|\leq 1$ and the random variables $W_i^{(\bbeta,\bv_1,\bv_2,s)}$, $i=1,\dots,n$, have  the same standard Gaussian distribution $\mcN(0,1)$, for $n>n_1$ the Laplace function 
$z\mapsto L^{(V)}_{n,i}(z)$ of $V_{n,i}$ is well defined on $(-1,1)$ and
\begin{eqnarray*}
&&L^{(V)}_{n,i}(z)\defin\Esp_{\btheta+(\bw_1+s(\bw_2-\bw_1))\Phi_n^{(\btheta)}}\left[\exp\Big[z(\bv_2-\bv_1)\ba_i\big((W_i^{(\bbeta,\bv_1,\bv_2,s)})^2-1\big)\Big]\right]\\
&&\qquad\quad =\,
\exp\left[-z(\bv_2-\bv_1)\ba_i-\frac{1}{2}\ln\big(1-2z(\bv_2-\bv_1)\ba_i\big)\right].
\end{eqnarray*} 
Recall that the random variables $W_i^{(\bbeta,\bv_1,\bv_2,s)}$, $i=1,\dots,n$, are independent, hence the Laplace function of $(\bv_2-\bv_1)\bpsi_n^{(\bbeta)}V_n^{(\bbeta,\bv_1,\bv_2,s)}$ is equal to
$$L^{(V)}_n(z)=\prod_{i=1}^nL^{(V)}_{n,i}(z).$$
Consequently 
$$\ln L^{(V)}_n(z)= z\sum_{i=1}^n(\bv_1-\bv_2)\ba_i-\frac{1}{2}\sum_{i=1}^n\ln\big(1+2z(\bv_1-\bv_2)\ba_i\big)$$
for any $z\in(-1,1)$.

Notice that in one hand for every integer $r\geq 1$, the $r$th derivative of $L^{(V)}_n(z)$ at $z\in(-1,1)$ exists and is equal to 
\begin{equation}\label{eq:deriv-laplace-funct}
\partial_z^rL^{(V)}_n(z)=\sum_{k=0}^{r-1}\Big(^{\,r-1}_{\,\,\,\, k}\Big)\partial_z^kL^{(V)}_n(z)\,\partial_z^{r-k}\ln L^{(V)}_n(z).
\end{equation}
In the other hand
$$\partial_z\ln L^{(V)}_n(z)=\sum_{i=1}^n(\bv_1-\bv_2)\ba_i-\sum_{i=1}^n\frac{(\bv_1-\bv_2)\ba_i}{1+2z(\bv_1-\bv_2)\ba_i}$$
and for every integer $r\geq 2$
$$\partial_z^r\ln L^{(V)}_n(z)=(-1)^{r}\times 2^{r-1}\sum_{i=1}^n\frac{\big((\bv_1-\bv_2)\ba_i)\big)^r}{\big(1+2z(\bv_1-\bv_2)\ba_i\big)^r}.$$
Then $\partial_z\ln L^{(V)}_n(0)=0$ and for every integer $r\geq 2$
\begin{eqnarray*}
&&\big|\partial_z^{r}\ln L^{(V)}_n(0)\big|\leq 2^{r-1}\sum_{i=1}^n\big|(\bv_1-\bv_2)\ba_i)\big|^r\\
&&\leq\,
 2^{r-1}\sum_{i=1}^n\left|(\bv_1-\bv_2)\bpsi_n^{(\bbeta)}\frac{\grad_{\bbeta}^{\etransp}G_i^2\big(\bbeta+(\bv_1+s(\bv_2-\bv_1))\bpsi_n^{(\bbeta)}\big)}{2G_i^2\big(\bbeta+(\bv_1+s(\bv_2-\bv_1))\bpsi_n^{(\bbeta)}\big)}\right|^r\\
&&\leq\,
 2^{r-2}|\bv_2-\bv_1|^r\big|\big(\bJ_q^{(\btheta)}\big)^{-1/2}\big|^rn^{1-r/2}\frac{\sup_{\bbeta',t}\big|\grad_{\bbeta}\sigma^2(\bbeta',t)\big|^r}{\inf_{\bbeta',t}\sigma^{2r}(\bbeta',t)}
\end{eqnarray*}
as $\bpsi_n^{(\bbeta)}=\big(n\bJ_q^{(\bbeta)}\big)^{-1/2}$.
Thanks to equality~(\ref{eq:deriv-laplace-funct}), by induction we obtain inequality~\ref{eq:Laplace-Vn}). This achieves the proof of the lemma.
\end{proof}

\section{Appendix B}

	To illustrate  assumptions A3 and A4  we state the following  results the proofs of which are left to the reader. (See also the particular case of the linear parameters models in Section~\ref{sect:linear}).

First,  when we assume the continuity with respect to $t$ and that the delays between observations go to 0, the sums in the definitions of $\bJ_p^{(\balpha,\bbeta)}$, $\bJ_q^{(\bbeta)}$, $\mu_p(\balpha,\balpha')$ and $\mu_q(\bbeta,\bbeta')$  can be replaced by integrals.	
\begin{lemm}\label{lemm:conv-FiGicarr} Assume that the function $t\mapsto \big(f(\balpha,t),\grad_{\balpha}f(\balpha,t)\big)$ uniformly continuous in $\mbR^+$
uniformly with respect to $\balpha$ varying in $\bA$ and that the function  $t\mapsto\big(\sigma^2(\bbeta,t),\grad_{\bbeta}\sigma^2(\bbeta,t)\big)$ is uniformly continuous in $\mbR^+$ uniformly  with respect to $\bbeta$ varying in $\bB$. Assume also that $\inf_{\bbeta,t}\sigma^2(\bbeta,t)>0$ and  $h_n\to 0$. Then
	\begin{eqnarray*}
				&&
				\lim_{n\to\infty}\frac{1}{T_n}\sup_{\balpha,\bbeta}\left|\sum_{i=1}^n \frac{\grad_{\balpha}^{\etransp}F_i(\balpha)\,\grad_{\balpha}F_i(\balpha)}{G_i^2(\bbeta)}
			-\int_0^{T_n}\frac{\grad_{\balpha}^{\etransp}f(\balpha,t)\,\grad_{\balpha}f(\balpha,t)}{\sigma^2(\bbeta,t)}\,dt\right|=0,
			\label{lim:conv-FiGicarr-f'scarr}\\
			&&
			\lim_{n\to\infty}\frac{1}{T_n}\sup_{\balpha,\balpha'}\left|\sum_{i=1}^n \frac{\big(F_i(\balpha)-F_i(\balpha')\big)^2}{t_i-t_{i-1}}
			-\int_0^{T_n}\big(f(\balpha,t)-f(\balpha',t)\big)^2\,dt\right|=0.
			\label{lim:conv-Fi-Ficarr-f-fcarr}.
				\end{eqnarray*}
			When, in addition $t_i-t_{i-1}=h_n\to 0$, then
			\begin{eqnarray*}
			&&	\hspace{-15pt}
			\lim_{n\to\infty}\sup_{\bbeta}\left|\frac{1}{n}\sum_{i=1}^n \grad_{\bbeta}^{\etransp}\ln G_i^2(\bbeta)\,\grad_{\bbeta}\ln G_i^2(\bbeta)
			-\frac{1}{T_n}\int_0^{T_n}\grad_{\bbeta}^{\etransp}\ln\sigma^2(\bbeta,t)\,\grad_{\bbeta}\ln\sigma^2(\bbeta,t)\,dt\right|=0,
			\label{lim:conv-lnGicarr-lns'carr}\\
		&& \hspace{-15pt}
			\lim_{n\to\infty}\sup_{\bbeta,\bbeta'}\left|\frac{1}{n}\sum_{i=1}^n \frac{\big(G_i^2(\bbeta)-G_i^2(\bbeta')\big)^2}{(t_i-t_{i-1})^2}
			-\frac{1}{T_n}\int_0^{T_n}\big(\sigma^2(\bbeta,t)-\sigma^2(\bbeta',t)\big)^2\,dt\right|=0.
						\label{lim:conv-Gi-Gicarr-s-scarr}
		\end{eqnarray*}
	\end{lemm}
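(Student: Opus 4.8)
The plan is to reduce each of the four limit statements to a single quantitative estimate: replacing an increment $F_i(\balpha)$ or $G_i^2(\bbeta)$ by $(t_i-t_{i-1})$ times the value of the integrand at a point of $[t_{i-1},t_i]$, up to an error controlled by the modulus of (uniform) continuity of that integrand times $(t_i-t_{i-1})$. First I would fix notation: write $h_n=\max_i(t_i-t_{i-1})$ and, for a uniformly continuous function $g(t)$ on $\mbR^+$, denote by $\omega_g(\delta)$ its modulus of continuity, so that $\omega_g(\delta)\to 0$ as $\delta\to 0$. By the mean-value theorem for integrals, $F_i(\balpha)=\int_{t_{i-1}}^{t_i}f(\balpha,t)\,dt = (t_i-t_{i-1})\,f(\balpha,s_i)$ for some $s_i\in[t_{i-1},t_i]$ is not quite available when $f$ is only locally integrable, but the cruder bound $\big|F_i(\balpha)-(t_i-t_{i-1})f(\balpha,t)\big|\le (t_i-t_{i-1})\,\omega_{f(\balpha,\cdot)}(h_n)$ for all $t\in[t_{i-1},t_i]$ holds by writing $F_i(\balpha)-(t_i-t_{i-1})f(\balpha,t)=\int_{t_{i-1}}^{t_i}\big(f(\balpha,u)-f(\balpha,t)\big)\,du$. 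Under the stated uniform-in-$\balpha$ continuity hypothesis, $\omega$ can be taken independent of $\balpha$, and likewise for $\grad_\balpha f$, $\sigma^2$, $\grad_\bbeta\sigma^2$.

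For the first identity I would write the $i$-th summand of $\frac{1}{T_n}\sum_i \grad_\balpha^{\etransp}F_i(\balpha)\grad_\balpha F_i(\balpha)/G_i^2(\bbeta)$ and the corresponding piece $\frac1{T_n}\int_{t_{i-1}}^{t_i}\grad_\balpha^{\etransp}f(\balpha,t)\grad_\balpha f(\balpha,t)/\sigma^2(\bbeta,t)\,dt$, subtract, and telescope the difference through three terms (replacing in turn $\grad_\balpha F_i$ by $(t_i-t_{i-1})\grad_\balpha f(\balpha,t)$ in each factor and $G_i^2(\bbeta)$ by $(t_i-t_{i-1})\sigma^2(\bbeta,t)$). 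Each replacement contributes an error bounded, uniformly in $(\balpha,\bbeta)$, by $C(t_i-t_{i-1})\big(\omega_{\grad_\balpha f}(h_n)+\omega_{\sigma^2}(h_n)\big)$ — here I use Assumption~A2 to bound $\grad_\balpha f$, $\sigma^2$ and $1/\sigma^2$ uniformly (the lower bound $\inf\sigma^2>0$ is exactly what keeps the reciprocals controlled). Summing over $i$ the factor $(t_i-t_{i-1})$ produces $T_n$, which cancels the prefactor $1/T_n$, leaving a bound $\le C\big(\omega_{\grad_\balpha f}(h_n)+\omega_{\sigma^2}(h_n)\big)\to 0$ as $h_n\to 0$. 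The second identity is the same argument with $\grad_\balpha f$ replaced by $f(\balpha,\cdot)-f(\balpha',\cdot)$ (locally integrable, and on any bounded time window one only needs the uniform continuity of $f$ itself, which follows from that of $\grad_\balpha f$ together with A2 when $\bA$ is bounded); the denominator is $t_i-t_{i-1}$ rather than $G_i^2(\bbeta)$, which actually simplifies matters.

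For the last two identities, the extra hypothesis is that the mesh is \emph{uniform}, $t_i-t_{i-1}=h_n$ with $nh_n=T_n$, so $1/n=h_n/T_n$. Then $\frac1n\sum_i g_i = \frac1{T_n}\sum_i h_n g_i$, and for the third identity I would first replace $\grad_\bbeta\ln G_i^2(\bbeta)=\grad_\bbeta G_i^2(\bbeta)/G_i^2(\bbeta)$ by $\grad_\bbeta\sigma^2(\bbeta,t)/\sigma^2(\bbeta,t)=\grad_\bbeta\ln\sigma^2(\bbeta,t)$ using the approximations above for both $\grad_\bbeta G_i^2$ and $G_i^2$ (again the lower bound on $\sigma^2$ is essential), then recognize $h_n\,\grad_\bbeta^{\etransp}\ln\sigma^2(\bbeta,t_i^\ast)\grad_\bbeta\ln\sigma^2(\bbeta,t_i^\ast)$ as a Riemann sum for $\int_0^{T_n}\grad_\bbeta^{\etransp}\ln\sigma^2\,\grad_\bbeta\ln\sigma^2\,dt$, the Riemann-sum error being again $\le C\,\omega(h_n)\to 0$ uniformly in $\bbeta$. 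The fourth identity is identical with $\grad_\bbeta\ln\sigma^2$ replaced by $\sigma^2(\bbeta,\cdot)-\sigma^2(\bbeta',\cdot)$ and the factor $1/(t_i-t_{i-1})^2=1/h_n^2$ combining with $(G_i^2(\bbeta)-G_i^2(\bbeta'))^2\approx h_n^2(\sigma^2(\bbeta,t)-\sigma^2(\bbeta',t))^2$ to leave $h_n(\sigma^2(\bbeta,t_i^\ast)-\sigma^2(\bbeta',t_i^\ast))^2$, again a Riemann sum. The only mild obstacle is bookkeeping: making sure that in each telescoping step the \emph{uniformity} over $(\balpha,\bbeta)$ (resp.\ over the pair with $|\balpha-\balpha'|\ge\nu$) is preserved — this is guaranteed by the uniform-in-parameter hypotheses on the moduli of continuity in A1 together with the uniform bounds in A2 — and keeping the constant $C$ explicitly independent of $n$. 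No deeper idea is needed; the proof is, as the authors say, left to the reader precisely because it is a routine uniform-approximation-of-integrals-by-Riemann-sums argument.
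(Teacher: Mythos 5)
Your proposal is correct and follows essentially the same route as the paper's own (reader-left, Riemann-sum) argument: approximate each increment $F_i(\balpha)$, $\grad_{\balpha}F_i(\balpha)$, $G_i^2(\bbeta)$, $\grad_{\bbeta}G_i^2(\bbeta)$ by $(t_i-t_{i-1})$ times the value of the corresponding integrand at a point of $[t_{i-1},t_i]$, with an error controlled by the uniform-in-parameter modulus of continuity, then telescope and sum, the bound $\inf_{\bbeta,t}\sigma^2(\bbeta,t)>0$ keeping the reciprocals under control exactly as you say. One small slip worth noting: the uniform continuity of $t\mapsto f(\balpha,t)$ needed for the second limit is directly part of the lemma's hypothesis and does not ``follow from that of $\grad_{\balpha}f$''; this does not affect the validity of the argument.
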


		\bigskip
	\paragraph{Almost periodic functions}
	First recall that a  function $t\mapsto \phi(\bchi,t)$ is almost periodic in $\mbR$ uniformly with respect to $\bchi$  varying in a set $X$ 
when for every $\epsilon>0$, there exists $l_{\epsilon}>0$ such that for any $a\in\mbR$ there is $\rho\in[a,a+l_{\epsilon}]$  for which
$$\sup_{\bchi,t}\big|\phi(\bchi,t+\rho)-\phi(\bchi,t)\big|\leq\epsilon.$$
See e.g chapters II and IV in (Corduneanu 1968).
As an example, let $k$ be a positive number and let $\lambda_1,\dots,\lambda_k$ be $k$ distinct real numbers. Then 
the function $\phi(\bchi,t)=\chi_1\cos(\lambda_1t)+\cdots+\chi_k\cos(\lambda_kt)$
is almost periodic in $t$ uniformly with respect to $\bchi=(\chi_1,\dots,\chi_k)$ varying in any compact subset $X$ of $\mbR^k$.

 Now Lemma~\ref{lemm:conv-FiGicarr}  can be applied when the function $t\mapsto \big(f(\balpha,t),\grad_{\balpha}f(\balpha,t)\Big)$ is almost periodic in $\mbR$ uniformly with respect to $\balpha$  varying in  $\bA$,  the function $t\mapsto\big(\sigma^2(\bbeta,t),\grad_{\bbeta}\sigma^2(\bbeta,t)\big)$ is almost periodic in $\mbR$ uniformly with respect to $\bbeta$  varying in  $\bB$, $\inf_{\bbeta,t}\sigma^2(\bbeta,t)>0$ and $h_n\to 0$ as $n\to\infty$.

Furthermore  
 $\bJ_p^{(\balpha,\bbeta)}$ and $\mu_p(\balpha,\balpha')$ exists and 
\begin{eqnarray*}
			&&
				\bJ_p^{(\balpha,\bbeta)}
			=\lim_{T\to\infty}\frac{1}{T}\int_0^T\frac{\grad_{\balpha}^{\etransp}f(\balpha,t)\,\grad_{\balpha}f(\balpha,t)}{\sigma^2(\bbeta,t)}\,dt,\\
		&&
				\mu_p(\balpha,\balpha')
			=\lim_{T\to\infty}\frac{1}{T}\int_0^T\big(f(\balpha,t)-f(\balpha',t)\big)^2\,dt.
				\end{eqnarray*}
the convergences being uniform with respect to $\balpha$ varying in $\bA$ and with respect to $\bbeta$ varying in $\bB$.
If in addition, $t_i-t_{i-1}=h_n $			then $\bJ_q^{(\bbeta)}$ and $\mu_q(\bbeta,\bbeta')$ exist and
				\begin{eqnarray*}
				&&\bJ_q^{(\bbeta)}
			=\lim_{T\to\infty}\frac{1}{2T}\int_0^T\grad_{\bbeta}^{\etransp}\ln\sigma^2(\bbeta,t)\,\grad_{\bbeta}\ln\sigma^2(\bbeta,t)\,dt,\\
				&&
				\mu_q(\bbeta,\bbeta')
						=\lim_{T\to\infty}\frac{1}{T}\int_0^T\big(\sigma^2(\bbeta,t)-\sigma^2(\bbeta',t)\big)^2\,dt.			
		\end{eqnarray*}
	the convergences being		uniform with respect to $\bbeta$ varying in $\bB$.
	
\paragraph{Periodic functions}
When the functions $f(\balpha,t)$ and $\sigma^2(\bbeta,t)$ are periodic  in $t$ with the same period $P>0$, we obtain expressions for
$\bJ_p^{(\balpha,\bbeta)}$, $\mu_p(\balpha,\balpha')$, $\bJ_q^{(\bbeta)}$ and $\mu_q(\bbeta,\bbeta')$.
For continuous functions when $h_n\to 0$ Lemma~\ref{lemm:conv-FiGicarr} entails that
				\begin{eqnarray*}
			&&
				\bJ_p^{(\balpha,\bbeta)}
			=\frac{1}{P}\int_0^P\frac{\grad_{\balpha}^{\etransp}f(\balpha,t)\,\grad_{\balpha}f(\balpha,t)}{\sigma^2(\bbeta,t)}\,dt,\\
		&&
				\mu_p(\balpha,\balpha')
			=\frac{1}{P}\int_0^P\big(f(\balpha,t)-f(\balpha',t)\big)^2\,dt.
				\end{eqnarray*}
If in addition, $t_i-t_{i-1}=h_n $			then
				\begin{eqnarray*}
				&&\bJ_q^{(\bbeta)}
			=\frac{1}{2P}\int_0^P\grad_{\bbeta}^{\etransp}\ln\sigma^2(\bbeta,t)\,\grad_{\bbeta}\ln\sigma^2(\bbeta,t)\,dt,\\
				&&
				\mu_q(\bbeta,\bbeta')
			=\frac{1}{P}\int_0^P\big(\sigma^2(\bbeta,t)-\sigma^2(\bbeta',t)\big)^2\,dt.			
		\end{eqnarray*}
		
			By now we no longer assume that the delays between two observations tend to 0, but we assume that the sampling scheme has some periodic feature, that is  $t_{j\nu+k}=t_k+jP$ and $t_{j\nu}=j P$, for some $\nu\in\mbN$, and for any $j,k\in\mbN$. 
	Then we have
	\begin{eqnarray*}
	\bJ_p^{(\balpha,\bbeta)}=
	\frac{1}{P}\sum_{k=1}^{\nu}\frac{\grad_{\balpha}^{\etransp}F_k(\balpha)\,\grad_{\balpha}F_k(\balpha)}{G_k^2(\bbeta)}
	=\frac{1}{T_n}\sum_{i=1}^n\frac{\grad_{\balpha}^{\etransp}F_i(\balpha)\grad_{\balpha}F_i(\balpha)}{G_i^2(\bbeta)}
		\end{eqnarray*}
		and
	\begin{eqnarray*}
	\mu_p(\balpha,\balpha')
	=\frac{1}{P}\sum_{i=1}^{\nu}\frac{\big(F_i(\balpha)-F_i(\balpha')\big)^2}{t_i-t_{i-1}}
	=\frac{1}{T_n}\sum_{i=1}^n\frac{\big(F_i(\balpha)-F_i(\balpha')\big)^2}{t_i-t_{i-1}}
		\end{eqnarray*}
for any $T_n=nP/\nu$ with $n/\nu\in\mbN$.		 Furthermore
		\begin{eqnarray*}
	\bJ_q^{(\bbeta)}=
	\frac{1}{2\nu}\sum_{i=1}^{\nu}\grad_{\bbeta}^{\etransp}\ln G_i^2(\balpha)\,\grad_{\bbeta}\ln G_i^2(\bbeta)
	=\frac{1}{2n}\sum_{i=1}^n\grad_{\bbeta}^{\etransp}\ln G_i^2(\balpha)\,\grad_{\bbeta}\ln G_i^2(\bbeta)
		\end{eqnarray*}
and
	\begin{eqnarray*}
	\mu_q(\bbeta,\bbeta')=\frac{1}{\nu}\sum_{i=1}^{\nu}\frac{\big(G_i^2(\bbeta)-G_i^2(\bbeta')\big)^2}{(t_i-t_{i-1})^2}
	=\frac{1}{n}\sum_{i=1}^n\frac{\big(G_i^2(\bbeta)-G_i^2(\bbeta')\big)^2}{(t_i-t_{i-1})^2}.
		\end{eqnarray*}
		
Furthermore when we assume that $t_i-t_{i-1}=h>0$ fixed and $P=\nu h$, $\nu\in\mbN$, we obtain that	
	\begin{eqnarray*}
	\mu_q(\bbeta,\bbeta')=\frac{1}{Ph}\sum_{i=1}^{\nu}\big(G_i^2(\bbeta)-G_i^2(\bbeta')\big)^2.
			\end{eqnarray*}

	
\newpage

\end{document}